\documentclass[10pt]{amsart}
\UseRawInputEncoding
\usepackage{amssymb}
\usepackage{dsfont}
\usepackage{amscd}
\usepackage[mathscr]{euscript} 
\usepackage[all]{xy}
\usepackage{url}
\usepackage{stmaryrd}
\usepackage{comment}
\usepackage[retainorgcmds]{IEEEtrantools}
\usepackage{hyperref}
\usepackage{graphicx}
\usepackage{mathtools}
\usepackage{centernot}
\usepackage{marvosym}
\usepackage{tikz}
\usepackage{tkz-fct}
\usepackage{soul}
\usepackage{lineno}

\usepackage[T1]{fontenc}

\setlength\parindent{11pt}


\title[Stable formulas in ordered structures] {Stable formulas in ordered structures}

\author[D. M. HOFFMANN]{Daniel Max Hoffmann$^{\dagger}$}
\thanks{2010 \textit{Mathematics Subject Classification}. 
Primary 03C64; 
Secondary 03C45, 03C10}
\thanks{\textit{Key words and phrases}. real closed fields, ordered structures, stable formulas}
\thanks{$^{\dagger}$SDG. The first author is supported by
the Polish Natonal Agency for Academic Exchange and
 the National Science Centre (Narodowe Centrum Nauki, Poland) grants no. 2016/21/N/ST1/01465,
and 2015/19/B/ST1/01150.}
\address{$^{\dagger}$ Instytut Matematyki\\
Uniwersytet Warszawski\\
Warszawa\\
Poland
\newline \indent {\em and}\newline
\indent \hspace{1mm} Department of Mathematics \\ University of Notre Dame \\ Notre Dame \\ IN \\ USA}
\email{daniel.max.hoffmann@gmail.com}
\urladdr{https://sites.google.com/site/danielmaxhoffmann/home}

\author[C.M. Tran]{Chieu-Minh Tran$^{\ast}$}
\address{$^{\ast}$Department of Mathematics \\ University of Notre Dame \\ Notre Dame \\ IN \\ USA}
\email{mtran6@nd.edu}
\urladdr{https://faculty.math.illinois.edu/~mctran2/}

\author[J. Ye]{Jinhe Ye$^{\ddagger}$}
\address{$^{\ddagger}$ Institut de Math\'ematiques de Jussieu-Paris Rive Gauche}
\email{jinhe.ye@imj-prg.fr}
\urladdr{https://sites.google.com/view/vincentye}


 \DeclareMathOperator{\theo}{Th}

\DeclareMathOperator{\ddf}{DF}\DeclareMathOperator{\dcf}{DCF}\DeclareMathOperator{\scf}{SCF}

\DeclareMathOperator{\rcf}{RCF}

\DeclareMathOperator{\ess}{ess}
\DeclareMathOperator{\bd}{bd}
\DeclareMathOperator{\INTER}{int}
\DeclareMathOperator{\esb}{\bd^{\ess}}
\DeclareMathOperator{\esi}{\INTER^{\ess}}

\DeclareMathOperator{\Nn}{\mathbb{N}}

\DeclareMathOperator{\Rr}{\mathbb{R}}

\DeclareMathOperator{\Pp}{\mathbb{P}}

\newcommand{\sM}{\mathscr{M}}
\newcommand{\sC}{\mathscr{C}}
\newcommand{\RCF}{\mathrm{RCF}}
\newcommand{\monster}{\boldsymbol{\sM}}
\newcommand{\monsterset}{\boldsymbol{M}}
\newcommand{\ccl}{\mathrm{ccl}}

\newtheorem{theorem}{Theorem}[section]
\newtheorem{prop}[theorem]{Proposition}
\newtheorem{lemma}[theorem]{Lemma}

\newtheorem{fact}[theorem]{Fact}
\theoremstyle{definition}
\newtheorem{definition}[theorem]{Definition}
\newtheorem{example}[theorem]{Example}
\newtheorem{remark}[theorem]{Remark}

\theoremstyle{remark}
\newtheorem*{theorem*}{Theorem}
\newtheorem*{cor*}{Corollary}

\theoremstyle{definition}
\theoremstyle{definition}

\theoremstyle{definition}

\theoremstyle{remark}

\makeatletter
\providecommand*{\cupdot}{%
  \mathbin{%
    \mathpalette\@cupdot{}%
  }%
}
\newcommand*{\@cupdot}[2]{%
  \ooalign{%
    $\m@th#1\cup$\cr
    \sbox0{$#1\cup$}%
    \dimen@=\ht0 %
    \sbox0{$\m@th#1\cdot$}%
    \advance\dimen@ by -\ht0 %
    \dimen@=.5\dimen@
    \hidewidth\raise\dimen@\box0\hidewidth
  }%
}

\providecommand*{\bigcupdot}{%
  \mathop{%
    \vphantom{\bigcup}%
    \mathpalette\@bigcupdot{}%
  }%
}
\newcommand*{\@bigcupdot}[2]{%
  \ooalign{%
    $\m@th#1\bigcup$\cr
    \sbox0{$#1\bigcup$}%
    \dimen@=\ht0 %
    \advance\dimen@ by -\dp0 %
    \sbox0{\scalebox{2}{$\m@th#1\cdot$}}%
    \advance\dimen@ by -\ht0 %
    \dimen@=.5\dimen@
    \hidewidth\raise\dimen@\box0\hidewidth
  }%
}
\makeatother

\def\Ind#1#2{#1\setbox0=\hbox{$#1x$}\kern\wd0\hbox to 0pt{\hss$#1\mid$\hss}
\lower.9\ht0\hbox to 0pt{\hss$#1\smile$\hss}\kern\wd0}

\def\notind#1#2{#1\setbox0=\hbox{$#1x$}\kern\wd0
\hbox to 0pt{\mathchardef\nn=12854\hss$#1\nn$\kern1.4\wd0\hss}
\hbox to 0pt{\hss$#1\mid$\hss}\lower.9\ht0 \hbox to 0pt{\hss$#1\smile$\hss}\kern\wd0}

\begin{document}

\newcommand{\ov}{\overline}
\newcommand{\FC}{\mathfrak{C}}

\newcommand{\twoc}[3]{ {#1} \choose {{#2}|{#3}}}
\newcommand{\thrc}[4]{ {#1} \choose {{#2}|{#3}|{#4}}}
\newcommand{\Kk}{{\mathds{K}}}

\newcommand{\dlog}{\mathrm{ld}}
\newcommand{\ga}{\mathbb{G}_{\rm{a}}}
\newcommand{\gm}{\mathbb{G}_{\rm{m}}}
\newcommand{\gaf}{\widehat{\mathbb{G}}_{\rm{a}}}
\newcommand{\gmf}{\widehat{\mathbb{G}}_{\rm{m}}}
\newcommand{\gdf}{\mathfrak{g}-\ddf}
\newcommand{\gdcf}{\mathfrak{g}-\dcf}
\newcommand{\fdf}{F-\ddf}
\newcommand{\fdcf}{F-\dcf}
\newcommand{\mw}{\scf_{\text{MW},e}}

\newcommand{\BC}{{\mathbb C}}

\newcommand{\CC}{{\mathcal C}}
\newcommand{\CG}{{\mathcal G}}
\newcommand{\CK}{{\mathcal K}}
\newcommand{\CL}{{\mathcal L}}
\newcommand{\CN}{{\mathcal N}}
\newcommand{\CS}{{\mathcal S}}
\newcommand{\CU}{{\mathcal U}}
\newcommand{\CF}{{\mathcal F}}
\newcommand{\CP}{{\mathcal P}}
\newcommand{\CI}{{\mathcal I}}

\begin{abstract}
    We classify the stable formulas in the theory of Dense Linear Orders without endpoints, the stable formulas in the theory of Divisible Abelian Groups, and the stable formulas without parameters in the theory of Real Closed Fields. The third result, unexpectedly, requires the Hironaka's theorem on resolution of singularities.
\end{abstract}

\maketitle

\section{Introduction}
In recent years, we have seen rapid development of the neostability program which aims to extend the ideas of stability to other settings. Efforts have been made toward investigating weaker notions (NIP, simplicity, NSOP$_1$, NTP$_2$, etc), considering the stable components (stably dominated types, stables formulas, etc) in unstable theories, or a mix and match between these themes; see~\cite{tentzieg} for the relevant definitions (e.g. Section 8.2 for the stability related notions).
In this paper, we are interested in stable formulas\textemdash also called ``stable relations''\textemdash in unstable theories, in other words, the local stability of these theories. This is an old direction which nevertheless continues to hold relevance with recent applications in combinatorics (\cite{ArtemSergei16}, \cite{distal-reg}, \cite{ArtemSergei18},  \cite{GabeAnandCaroline18}, \cite{MalliarisShelah}). Stable formulas is related to thorn-forking \cite{thorn16} and is the subject of stable forking conjecture for simple theories (\cite{kim2001}, \cite{around}).
Surprisingly, not much attention have been paid to the down-to-earth problem of classifying stable formulas in frequently seen examples of unstable theory. Our goal here is to fill this gap for the most obvious unstable structures, those that involves an ordering.

We know that ordering gives us unstability. The example below provides us with a slightly more general situation where we have unstability, namely, the formula defines a ``large set'' with a  ``slope''. It also points out why ``large'' and having a ``slope'' is necessary.

\begin{example}\label{example:basic}
Consider a strictly increasing function $f:[0,1]_{\Rr}\to[0,1]_{\Rr}$ definable in the ordered field $(\Rr; +, \times)$
such that $f(0)=0$ and $f(1)=1$, and take $$D:=\{(a,b)\in [0,1]_{\Rr}^2 \; | \;b<f(a)\}.$$
We will construct a sequence $(a_i,b_i)_{i<\omega}$ such that $(a_i,b_j)\in D$ if and only if $i\leqslant j$.
We start with any $(a_1,b_1)\in D$. 
Take $0<a_2<a_1$ such that $f(a_2)<b_1$, and $0<b_2<f(a_2)$ and continue this way.
Such a sequence can be similarly produced if we replace $f$ by a decreasing function. On the other hand, it is easy to see that there is no such sequence $(a_i,b_j)$ such that $(a_i,b_j)\in D$ if and only if $i\leqslant j$ when $D$ is the graph of $f$ or the entire set $[0,1]_{\Rr}^2$. 
\end{example}

For the rest of the paper, let $T$ be either the theory of dense linear orderings in $L=\{<\}$ (DLO), the theory of divisible ordered abelian groups in $L=\{+,0,<\}$ (DOAG), or the theory of real closed fields in $L=\{+,-,\cdot,0,1,<\}$ (RCF), let $\sM$ be a model of $T$ with underlying set $M$,  let $\varphi(x;y)$ be an $L(M)$-formula, and let $\dim$ denotes the o-minimal dimension of $T$; see~\cite{Lou} for the basic definition and results. When we say that $\varphi(x;y)$ is stable, we implicitly assume that stability is with respect to the pair $(x;y)$.   
We say that $\varphi(x;y)$ is {\bf rectangular} if $\varphi(x;y)$ is $T$-equivalent to $\psi(x) \wedge \theta(y)$ with $\psi(x)$ and $\theta(y)$ being $L(M)$-formulas. It is easy to see that rectangular formulas are stable, and so are their boolean combinations. Propostion~\ref{prop:introduction}, which combines the later Propostion~\ref{prop:symdiff} and Proposition~\ref{prop:new.2.12},
tell us that Example~\ref{example:basic} essentially points us in the right direction. Note that the condition $\dim \varphi(\sM) = |x|+|y|$ is the  precise version of what we meant by ``large'', and the notion of rectangular formula makes precise the idea of ``having no slope''. 
 \begin{prop}\label{prop:introduction}
 Suppose $\varphi(x;y)$ is stable, and $\dim \varphi(\sM) = |x|+|y|$ . Then there is an $L(M)$-formula $\varphi'(x;y)$ which is a disjuntion of rectangular $L(M)$-formulas such that $\dim\big( \varphi(\sM) \triangle \varphi'(\sM)\big) < |x|+|y|$.
\end{prop}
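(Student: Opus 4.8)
The plan is to argue by contraposition: if $\varphi(\sM)$ is ``large'' (i.e.\ of full dimension $n+m$, where $n=|x|$, $m=|y|$) but is not, up to lower dimension, a finite union of rectangles, then one of its defining cells has a boundary that genuinely ``slopes'' in some $x$--coordinate, and I will feed a strictly monotone slice of that boundary into Example~\ref{example:basic} to produce the order property, contradicting stability. Concretely, I would first apply o-minimal cell decomposition to write $\varphi(\sM)=\bigsqcup_j C_j$ with the $C_j$ pairwise disjoint cells of $M^{n+m}=M^n\times M^m$ (all $L(M)$--definable), taken compatibly with the $x$--coordinates listed before the $y$--coordinates. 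Cells of dimension $<n+m$ contribute only a set of dimension $<n+m$ to $\varphi(\sM)$, so it suffices to show that each open ($n+m$--dimensional) cell $C=C_j$ agrees, up to a set of dimension $<n+m$, with a product $P_C\times Q_C$ of $L(M)$--definable sets $P_C\subseteq M^n$, $Q_C\subseteq M^m$; then $\varphi'(x;y):=\bigvee_{j:\,\dim C_j=n+m}\big((x\in P_{C_j})\wedge(y\in Q_{C_j})\big)$ is the desired disjunction of rectangular $L(M)$--formulas.

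The heart of the argument concerns a single open cell, which has the form $C=\{(a,b):(a,b_{<m})\in C',\ f(a,b_{<m})<b_m<g(a,b_{<m})\}$ with $b_{<m}=(b_1,\dots,b_{m-1})$, $C'\subseteq M^{n+m-1}$ an open cell, and $f<g$ definable continuous functions on $C'$ (possibly $\pm\infty$). I would show that stability forces $f$ and $g$ to depend only on the $y$--coordinates on $C'$, up to a set of dimension $<n+m-1$. Assuming otherwise, say $f$ genuinely depends on $x_i$, a Fubini-type argument for o-minimal dimension gives a full-dimensional set of values of the remaining coordinates for which the induced one-variable function $x_i\mapsto f(\dots)$ is non-constant, hence strictly monotone on a subinterval $I$ (in DOAG this comes from an affine piece of nonzero slope; in DLO such a function is, on a piece, literally a coordinate). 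Freezing those remaining coordinates at a generic such value, with the slice still meeting $C'$, the slice of $C$ is the region strictly between the graphs over $I$ of the monotone function $f$ and of $g>f$; since the construction in Example~\ref{example:basic} only uses points in an arbitrarily small corner, the upper graph does not interfere, and one obtains an order-property sequence for this slice, whence for $\varphi(x;y)$ after reinstating the frozen coordinates -- contradicting stability.

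Next I would assemble the decomposition. Applying the previous paragraph to $f$, $g$ and, by induction on $m$, to $C'$, and peeling off the $y$--coordinates one at a time, I would conclude that up to a subset of dimension $<n+m$ the cell $C$ equals $P_C\times Q_C$, where $P_C\subseteq M^n$ is the cell of the induced decomposition of $M^n$ over which $C$ lies and $Q_C\subseteq M^m$ is the tower of bands between the ``generic values'' of the successive band functions; both are $L(M)$--definable (the generic value of a definable family of functions being definable). Summing over the finitely many full-dimensional cells and absorbing all lower-dimensional and exceptional pieces then gives $\dim(\varphi(\sM)\triangle\varphi'(\sM))<n+m$. (The DLO case is the degenerate instance: any cell with a sloping boundary already contains a copy of $<$, so a stable $\varphi$ has only rectangular top cells.)

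The step I expect to be the main obstacle is not the single-slice reduction to Example~\ref{example:basic}, but the bookkeeping of ``up to lower dimension'' through the inductive peeling: one must verify that ``$f$ depends only on $b_{<m}$ off a set of dimension $<n+m-1$'' really propagates to ``$C$ is, off a set of dimension $<n+m$, a product'', that the various exceptional loci (where band functions misbehave, where $f=g$, where the induced decomposition of $M^n$ is coarse, and -- in RCF -- where the boundary functions fail to be $C^1$) all project into $M^{n+m}$ in dimension $<n+m$, and that the ``generic value'' functions used to build $Q_C$ are $L(M)$--definable and genuinely glue into a cell. Carrying this out uniformly for RCF, DOAG and DLO is the technical core.
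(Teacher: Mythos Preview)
There is a genuine gap at the point you call ``the heart of the argument''. You reduce to a single open cell $C$ of a decomposition of $D=\varphi(\sM)$ and argue that if the lower band function $f$ depends on some $x_i$, then a one--variable slice of $C$ looks like the region above the graph of a strictly monotone function, so Example~\ref{example:basic} produces an unstable witness. The problem is that this witness is for $C$, not for $D$. The sequence you build has, for $i>j$, the point $(a_i,b_j)$ lying just \emph{below} the graph of $f$; but nothing prevents this point from belonging to another open cell of the decomposition that is also contained in $D$. In that case $(a_i,b_j)\in D$ for all $i,j$, and you get no order property for $\varphi$.

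A concrete counterexample in $\mathrm{RCF}$ (or $\mathrm{DLO}$): take $D=(0,1)^2\setminus\{(t,t):0<t<1\}$. This is stable, being a Boolean combination of the rectangular set $(0,1)^2$ and the equational set $\{x=y\}$, and it differs from a single rectangle by a one--dimensional set. Yet any cell decomposition of $D$ splits it into the two open triangles above and below the diagonal, each of which has the diagonal as a band function depending nontrivially on $x$. Your argument would declare $D$ unstable from either triangle, which is false. The failure is exactly that the points produced on the wrong side of the diagonal are still in $D$. So the reduction ``it suffices to show each full-dimensional cell is, up to lower dimension, a product'' is simply not valid: full-dimensional cells of a stable $D$ need not be close to products.

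The paper's proof avoids this by never looking at cell boundaries at all. It works directly with the \emph{essential boundary} $\esb_{X\times Y}(D)$, i.e.\ the set of points where both $D$ and its complement are locally full-dimensional. This is intrinsic to $D$ (it ignores internal boundaries like the diagonal above), and at smooth points of this set the tangent hyperplane is shown to be of the form $S_a\times T_b$ or $T_a\times S_b$; a genuinely ``sloping'' tangent yields a curve whose two projections are both one-dimensional, and \emph{that} curve feeds into Example~\ref{example:basic} to contradict stability of $D$. One then takes cell decompositions of $X$ and $Y$ separately, compatible with the projections of the bad locus, and checks that inside each resulting rectangle the essential boundary is empty, so $D$ fills or misses it up to lower dimension. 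If you want to salvage a cell-based argument you would need to first isolate the genuine frontier of $D$ (e.g.\ by working with $\partial D$ rather than with the graphs of band functions of an arbitrary decomposition), which brings you essentially to the paper's approach.
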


Proposition~\ref{prop:introduction} is not sufficient for the purpose of classifying stable formulas in $T$ as it says nothing when $\dim\varphi(\sM)< |x|+|y|$. We say that $\varphi(x;y)$ is {\bf order-free} if  $\varphi(x;y)$ is equivalent over $T$ to quantifier-free $L(M)$-formulas which do not contain $<$.
Order-free formulas form another natural class of stables formulas. It is possible to have $\varphi(x;y)$ order-free with $\dim\varphi(\sM)< |x|+|y|$.
If we consider a conjunction of a rectangular formula and an order-free formula, we still obtain a stable formula. Formulas of this form are said to be {\bf special stable}. A natural guess would be that every stable $L(M)$-formula is equivalent over $T$ to a finite union of special stable formulas. 
In Theorem \ref{thm:DLO.DOAG.main},
we show this is the case
when the theory under consideration is either the theory of Dense Linear Orders without Endpoints or the theory of Divisible Ordered Abelian Groups:

\begin{theorem}
Suppose $T$ is either $\mathrm{DLO}$ or $\mathrm{DOAG}$ and $\varphi(x;y)$ is stable. Then $\varphi(x;y)$ is equivalent over $T$ to a disjunction of special stable $L(M)$-formulas.
\end{theorem}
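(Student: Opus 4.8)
The plan is to run a direct structural analysis via quantifier elimination. Since $\mathrm{DLO}$ and $\mathrm{DOAG}$ eliminate quantifiers in their languages, $\phi(x;y)$ is $T$-equivalent to a quantifier-free $L(M)$-formula, and a cell decomposition of $\sM^{|x|+|y|}$ adapted to the finitely many atomic subformulas occurring in it writes $\phi(\sM)=\bigsqcup_{C\in\mathcal T}C$, where each \emph{cell} $C$ is the solution set of a complete system of basic conditions (for $\mathrm{DLO}$: a complete preorder on the coordinates of $x$, those of $y$, and the parameters; for $\mathrm{DOAG}$: a complete sign pattern $<0,\,=0,\,>0$ on the finitely many relevant $\mathbb Q$-affine forms over $M$), and $\mathcal T$ is a subset of the set of all cells. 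Two observations will be used repeatedly: special stable formulas are exactly those of the form $\psi(x)\wedge\theta(y)\wedge\varepsilon(x,y)$ with $\psi,\theta$ pure in $x$ resp.\ $y$ and $\varepsilon$ equational; and the class of finite disjunctions of special stable $L(M)$-formulas contains all rectangular and all equational formulas.

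The key device is an equivalence relation on cells. Attach to a cell $C$ its \emph{skeleton}: the affine locus $V_C$ spanned by $C$ (equivalently, cut out by the equalities that hold on $C$); the pure-in-$x$ and pure-in-$y$ conditions carried by $C$, where we must also count as pure any basic condition that, once restricted to $V_C$, no longer involves both $x$ and $y$; and, for every remaining basic condition $\ell$ that \emph{is} genuinely mixed along $V_C$, just the bit ``$\ell=0$ on $C$ or not''. Declare $C\sim C'$ when they have the same skeleton. Fix a $\sim$-class $O$. Its cells differ only in the signs of the genuinely-mixed-along-$V_C$ conditions that do not vanish, and one checks directly that $\bigcup_{C\in O}C$ equals $\psi_O(x)\wedge\theta_O(y)\wedge\varepsilon_O(x,y)$, where $\psi_O$ and $\theta_O$ collect the pure-in-$x$ resp.\ pure-in-$y$ part of the skeleton (those coming from conditions that became pure only along $V_C$ are genuinely pure as formulas once one also writes down the equations defining $V_C$), and $\varepsilon_O$ collects those equations together with the disequalities ``$\ell\neq0$'' for the non-vanishing genuinely-mixed conditions. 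Hence each $\bigcup_{C\in O}C$ is special stable. Consequently it suffices to prove that if $\phi$ is stable then $\mathcal T$ is a union of $\sim$-classes: then $\phi\equiv_T\bigvee_{O\subseteq\mathcal T}\big(\psi_O(x)\wedge\theta_O(y)\wedge\varepsilon_O(x,y)\big)$, as required.

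Suppose toward a contradiction that $\emptyset\neq\mathcal T\cap O\subsetneq O$ for some $\sim$-class $O$. The cells of $O$ are the relatively full-dimensional pieces into which the genuinely-mixed-along-$V_C$ hyperplanes (for $\mathrm{DLO}$: the still-free order comparisons) cut the region of $V_C$ carved out by the strict conditions of the skeleton; their adjacency graph is connected — connectivity of the linear-extension graph of a finite poset for $\mathrm{DLO}$, and connectivity of the region graph of a hyperplane arrangement inside a relatively open convex polyhedron for $\mathrm{DOAG}$. So there are cells $C\in\mathcal T\cap O$ and $C'\in O\setminus\mathcal T$ sharing a facet across which exactly one genuinely mixed condition $\ell_0$ changes sign. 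Now I reproduce Example~\ref{example:basic}: take $p$ in the relative interior of the common facet (so $p$ satisfies all the other conditions of $C$ with room to move), and choose a direction $(\dot x,\dot y)$ tangent to $V_C$ along which $\ell_0$ varies and which moves both blocks of variables — such a direction exists precisely because $\ell_0$ is genuinely mixed along $V_C$, so $V_C$ has tangent vectors $(\dot x^*,0)$ and $(0,\dot y^*)$ on which the $x$-part resp.\ $y$-part of $\ell_0$ is nonzero, and a small combination works (for $\mathrm{DLO}$: wiggle $x_{i_0}$ and $y_{j_0}$ inside the open interval in which $C$ and $C'$ order them oppositely). Displacing $a_s$ along $\dot x$ and $b_r$ along $\dot y$ by the amounts dictated by Example~\ref{example:basic}, so that the sign of $\ell_0$ at $(a_s,b_r)$ is the ``$C$-sign'' iff $s<r$, produces tuples with $(a_s,b_r)\in C\subseteq\phi(\sM)$ when $s<r$ and $(a_s,b_r)\in C'$ when $s\geq r$; since $C'$ is a full cell lying in no member of $\mathcal T$, we get $\models\phi(a_s;b_r)\iff s<r$, contradicting stability. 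This finishes the argument.

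The heart of the proof — and its only real difficulty — is the last step: ensuring that the order-property configuration extracted from the single bad cell $C$ actually witnesses instability of all of $\phi$, i.e.\ that the negative instances $(a_s,b_r)$, $s\ge r$, land in no cell of $\mathcal T$. This is exactly what localizing to the relative interior of the common facet of $C$ and $C'$ buys us: there the negative instances realize precisely $C'$, which was chosen outside $\mathcal T$. A second, more bookkeeping-flavoured subtlety is that a mixed basic condition may restrict to a pure one on $V_C$ (e.g.\ $x_1<y_1$ in the presence of $x_1=y_2$), so both the notion of skeleton and the splitting of $\bigcup_{C\in O}C$ as rectangular $\wedge$ equational must be taken relative to $V_C$; and for $\mathrm{DOAG}$ one must produce a flip direction that genuinely moves both $x$ and $y$, which is precisely where ``genuinely mixed along $V_C$'' is used.
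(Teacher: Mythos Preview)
Your proposal is correct and takes a genuinely different route from the paper's proof.

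The paper argues by induction on $\dim D$: it passes to the irreducible equational closure $Z$ of $D$, uses the special shape of such $Z$ in DLO/DOAG (Remark~\ref{remark:equational.closure.form}) to project isomorphically onto a full coordinate subspace $\monsterset^{(x';y')}$, applies the large-stable-set result (Proposition~\ref{prop:new.2.12}) to the projection to obtain rectangles up to a set of strictly smaller dimension, pulls back to special stable pieces, and recurses on the residue. Your argument is instead a one-shot combinatorial analysis: decompose into complete-sign-type cells, group them by skeleton, check that each skeleton class is already special stable, and show via an adjacency/flip construction (morally the same ladder as in Example~\ref{example:basic} and in the proof of Proposition~\ref{prop:symdiff}) that a stable $\phi$ must be saturated under~$\sim$. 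The key geometric insight---a genuinely mixed hyperplane through the boundary yields an order-property witness---is shared; what differs is that you invoke it once, locally at a facet, while the paper packages it into Proposition~\ref{prop:new.2.12} and feeds it into a dimension induction.

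What each approach buys: yours is more elementary and self-contained for DLO/DOAG, avoids the dimension induction entirely, and makes the combinatorics (linear-extension graph, chamber adjacency in a hyperplane arrangement) fully explicit. The paper's approach is set up precisely so that the same skeleton---project to full dimension, apply the large-set proposition, recurse---transfers to $\rcf$ in Section~\ref{sec:stableRCF}, where your cell-and-skeleton combinatorics would not survive: there the boundaries are arbitrary algebraic hypersurfaces rather than hyperplanes, no finite $\sim$-class story is available, and one needs resolution of singularities and compactness in place of your adjacency argument.
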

We expect that a result in this line can be obtained for more general linear orderings in $L=\{<\}$ and linearly ordered abelian groups in $L=\{<, +, 0\}$. However, we do not address this question in this paper.

Now, let us move to theory of Real Closed Fields and start with the following example, where we can see that the above description breaks down.

\begin{example}\label{example:naive.wrong}
In the next figure, the part of the curve on the left above the dashed line is defined by the system of equations and inequalities on the right:

\begin{center}
\centering
\begin{minipage}{.5\textwidth}
\begin{center}
\includegraphics[scale=0.5]{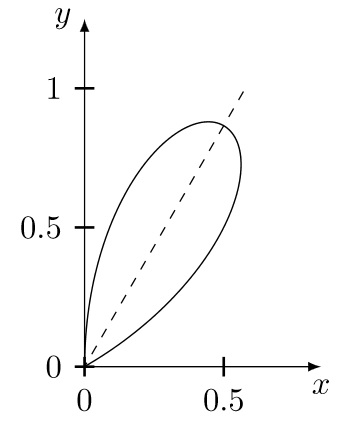} 
\end{center}
\end{minipage}%
\begin{minipage}{.5\textwidth}
  \centering
  \begin{IEEEeqnarray*}{rCl}
0 &=& x^{4}+2x^{2}y^{2}+y^{4}+x^{3}-xy^{2}; \\
0 &\leqslant& x; \\
0 &\leqslant& y; \\
\sqrt{3}x &\leqslant& y.
\end{IEEEeqnarray*}
\end{minipage}
\end{center}
Let $\varphi(x;y)$ with $|x| =|y|=1$ be the conjunction of the above equations and inequalities.  It is easy to check that $\varphi(x;y)$ is stable; in fact, every formula in two variables defining a one dimensional set is stable by cell decomposition. Note that the equation $x^{4}+2x^{2}y^{2}+y^{4}+x^{3}-xy^{2} $ defines an irreducible algebraic set of dimension $1$. 
Hence, if $\varphi(x;y)$ is a disjunction of special stable formulas, we can further arrange that each of these special stable formulas is a conjunction of $x^{4}+2x^{2}y^{2}+y^{4}+x^{3}-xy^{2} $ with a rectangular $L(M)$-formula. As $\mathrm{RCF}$ is o-minimal, we can arrange that each of these rectangular formulas defines a set of the form $I \times J$ where $I, J \subseteq \Rr$ are intervals. 
However, any set of the aforementioned form $I \times J$ containing the point $(0,0)$
will have to include a part of the curve below the dashed line $y=\sqrt{3}x$.
Thus, $\varphi(x;y)$ is not a disjunction of special stable $L(M)$-formulas.
\end{example}

In Example~\ref{example:naive.wrong}, the obstacle in expressing $\varphi(x;y)$ as a finite disjunction of special stable formulas comes from the singularity at $(0,0)$ of the curve in the picture. This brings us to the idea of using blowing up, or more precisely, resolution of singularities. It turns out that this is essentially the only obstruction when the stable formulas considered are parameter-free, and every such formula is equivalent over $\rcf$ to disjunctions of special stable formulas up to a certain kind of isomorphism. To be more precise, in Section \ref{sec:preliminaries}.3, we will define the notion of order-free isomorphism between a relation defined by a formula $\varphi(x;y)$ and a relation defined by a formula $\varphi'(x';y')$. This notion generalizes birational equivalence with a catch, namely, the division of variables must be respected. We obtain in Section~\ref{sec:stableRCF} our main result in this paper:

\begin{theorem}\label{thm:main.RCF0}
Suppose $T$ is $\rcf$. Then an $L$-formula $\varphi(x;y)$  is stable if and only if it is equivalent over $T$ to a 
disjunction of formulas order-free isomorphic to a special stable $L$-formula.
\end{theorem}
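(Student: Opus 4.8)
The strategy is to reduce to the one‑variable-block analysis used in the $\mathrm{DLO}$/$\mathrm{DOAG}$ case by a local, variable-respecting birational change of coordinates that resolves the singularities obstructing the rectangular decomposition. First I would invoke Proposition~\ref{prop:introduction} to handle the ``full dimensional part'': writing $\dim\varphi(\sM)=d$, I would stratify $\varphi(\sM)$ by dimension, noting that the top-dimensional stratum where $d=|x|+|y|$ is already, up to a lower-dimensional symmetric difference, a disjunction of rectangular formulas, hence special stable. Since dropping a set of strictly smaller dimension keeps the formula stable and does not affect the conclusion (a disjunction of special stable formulas plus a lower-dimensional stable correction is again of the required form, by induction on $d$), this lets me assume $\dim\varphi(\sM)<|x|+|y|$ and induct downward on $\dim\varphi(\sM)$.

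For the inductive step, let $X=\varphi(\sM)\subseteq M^{|x|}\times M^{|y|}$ be of dimension $d<|x|+|y|$. Taking the Zariski closure $\bar X$ and passing to its $d$-dimensional irreducible components (a finite disjunction, each still stable since low-dimensional), I reduce to the case where $\bar X$ is irreducible of dimension $d$. Here is where Hironaka enters: apply resolution of singularities to obtain a proper birational morphism $\pi\colon \widetilde X\to \bar X$ with $\widetilde X$ smooth — but crucially I must arrange this so that the blowing-up centers are chosen to be compatible with the product structure, so that the resulting equational isomorphism respects the division into $x$- and $y$-variables. The point is that on the smooth variety $\widetilde X$, the coordinate projections $\widetilde X\to M^{|x|}$ and $\widetilde X\to M^{|y|}$ become sufficiently well-behaved (locally, $\widetilde X$ looks like an affine space with coordinates that split cleanly between the two blocks) that the pullback of the stable relation, restricted to a suitable affine chart, becomes rectangular up to an equational formula — i.e. special stable. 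One then covers $\widetilde X$ by finitely many such charts; pulling these back along $\pi$ and reassembling gives $\varphi$ as a finite disjunction of pieces, each equationally isomorphic to a special stable $L$-formula, modulo a piece of strictly smaller dimension to which the inductive hypothesis applies.

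The main obstacle I anticipate is exactly the compatibility of resolution with the variable partition: ordinary Hironaka resolution makes no reference to a product decomposition of the ambient space, so one needs either to run the resolution algorithm on the ambient $\mathbb{A}^{|x|}\times\mathbb{A}^{|y|}$ with centers that are themselves products (or at least that project nicely), or to argue that on the resolved model the two projection maps are, locally in the analytic/Nash sense, equivalent to coordinate projections from affine space. Establishing that this local normal form really does turn the stable relation into something rectangular will use stability in an essential way — a stable relation on a smooth variety whose two projections are coordinate projections cannot have a ``slope'' in the sense of Example~\ref{example:basic}, forcing it (generically) to be a finite union of rectangles. A secondary technical point is descending the chart-by-chart decomposition through the proper morphism $\pi$ while keeping everything definable over the same parameter set and controlling the error term's dimension; this is bookkeeping but needs care, since $\pi$ being only birational means the decomposition downstairs is exact only away from the exceptional locus, which is precisely where the downward induction on dimension absorbs the remainder.
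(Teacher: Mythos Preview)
Your overall architecture---induction on dimension, pass to an irreducible Zariski closure, resolve singularities, cover by charts on which a projection lets you apply Proposition~\ref{prop:introduction}, absorb the error by induction---matches the paper. But two load-bearing steps are missing real arguments.

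First, the variable-respecting resolution. You correctly flag it as the main obstacle and then offer two options: run Hironaka with product-type centers, or hope that on the smooth model the two block projections locally look like coordinate projections. Neither is justified, and the paper does something different and concrete. One resolves the projective closure of $Z$ in $\mathbb{P}^x\times\mathbb{P}^y$ by an ordinary blow-up, getting a smooth $W\subseteq \mathbb{P}^x\times\mathbb{P}^y\times\mathbb{P}^z$; then one \emph{duplicates} the exceptional coordinates, embedding $W$ diagonally into $\mathbb{P}^x\times\mathbb{P}^z\times\mathbb{P}^y\times\mathbb{P}^{z'}$ via $(a,b,c)\mapsto (a,c,b,c)$, and finally uses Segre to land in some $\mathbb{P}^{x'}\times\mathbb{P}^{y'}$. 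This forces the birational map to factor as $f(x')\times g(y')$ without any constraint on the centers. Your proposal does not contain this idea, and without it the ``equational isomorphism respects the division'' step is a hope rather than a proof.

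Second, finiteness of the chart cover. You write ``covers $\widetilde X$ by finitely many such charts'' with no explanation. In the paper this is exactly where topological compactness enters, and it is the reason the theorem is stated only for $L$-formulas (equivalently, Archimedean parameters): one transfers to $\mathbb{R}$, restricts to $[-1,1]^{(x;y)}$ and then to $\mathbb{P}^x\times\mathbb{P}^y$, and uses Euclidean compactness to extract a finite subcover from the family of boxes on which an \'etale projection $\pi_{x',y'}$ to a subtuple $(x';y')$ with $|x'|+|y'|=\dim D$ is injective. Your proposal never mentions compactness, never restricts to real points, and projects to the full blocks $M^{|x|}$, $M^{|y|}$ rather than to subtuples of total length $\dim D$; but it is precisely this \'etale projection to a lower-dimensional product that makes the image full-dimensional so that Proposition~\ref{prop:introduction} applies again inside the induction.
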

One could ask whether the statement of the above theorem  also holds for a stable formula with parameters, and we think the answer should be yes. Our current proof for Theorem~\ref{thm:main.RCF0}, in fact, goes through for the more general case when $\varphi(x;y)$ is a formula with parameters over an Archimedean subfield of $\sM$. 
However, the proof involves topological compactness. It is unclear if this technique transfers to the most general case when the infinite/non-Archimedean parameters occur in $\varphi(x;y)$. In a parallel direction, some natural subsequent questions could be the classification of stable formulas in $\Rr_\mathrm{exp}$ and $\Rr_\mathrm{an}$, multi-ordered fields (\cite{WillThesis}) or $\mathbb{Q}_p$.

\subsection*{Notations and conventions} Throughout $m$ and $n$ are in $\Nn = \{0, 1, \ldots\}$. We adopt the usual monster model gadgets: $\monster$ is a $\kappa$-saturated and strongly $\kappa$-homogeneous with underlying set $\monsterset$,  we refer to definable with parameters in $\monsterset$ as simply definable, parameter sets  $A$ and $B$ are assumed to be small with respect to $\monster$, and other models are assumed to be elementary submodels of $\monster$. Moreover, we identify a formula $\varphi$ with the set $D$ it defines in $\monster$, and work semantically with definable sets instead of formulas.

In the paper, we use the following convention about variables:
Throughout $x$ and $y$ are finite tuples of variables. Let $\monsterset^x$ denote the cartesian power of $\monsterset$ indexed by the variables in $x$.
By $(x,y)$ we denote the tuple $x$ extended by the tuple $y$.  With an eye on stability, we also use the usual notational convention $(x;y)$ to emphasize the division of variables. So when we write $D\subseteq \monsterset^{(x;y)}$, we mean $D\subseteq\monsterset^{(x,y)}$ and we have a fixed division of variables $(x;y)$.

\section{Preliminaries}\label{sec:preliminaries}
We continue to let $T$ be $\text{DLO}= \theo(\Rr,<)$,  $\text{DOAG}= \theo(\Rr,+,0,<)$, or $\RCF = \text{Th}(\Rr,+,-,\cdot,0,1,<)$. Let $\monster$ be a model of $T$ with underlying set $\monsterset$,  let $D \subseteq \monsterset^{(x;y)}$ be  $L(\monsterset)$-definable, and let $\dim$ denote the o-minimal dimension of $T$. Recall that $D \subseteq \monsterset^{(x;y)}$ is {\bf unstable} (with respect to the variable division $(x; y)$) if some     (every) formula defining $D$ is unstable. More explicitly, such $D$ is unstable if there is $(a_i,b_j)_{i < \omega, j< \omega}$ such that $(a_i, b_j) \in D$ if and only if $i \leqslant j$. We call such $(a_i,b_j)_{i < \omega, j< \omega}$ an {\bf unstable witness} of $D$. As $T$ is complete, classifying stable formulas over $T$ is the same as classifying stable sets  over $\monster$.

Toward classifying stable sets, we consider several classes of sets which are obviously stable.
We say that $D$ is \textbf{order-free definable} if it is defined by an order-free formula as in the introduction. This property is equivalent to being quantifier-free definable in the reduct of $\monster$ without the linear order. A typical example of an order-free definable set is an algebraic set in $\monster \models \RCF$. We say a function $f:X\to Y$ is order-free definable if the graph of $f$ is an order-free definable. Abusing notation, the restriction of $f$ to a subset $Z$ of $X$ is also called order-free definable.

A definable set is \textbf{rectangular} if it is defined by a rectangular formula as in the introduction. It is easy to see that $D$ is rectangular if and only if $D  = X \times Y$ with $X \subseteq \monsterset^x$ and $Y \subseteq \monsterset^y$ definable in $\monster$. Clearly, order-free definable sets and rectangular sets are stable. A special kind of stable sets which generalizes both order-free definable sets and rectangular set is given in the following definition.

\begin{definition}\label{def:special.stable}
We say that $D\subseteq \monsterset^{(x;y)}$ is a {\bf special stable set}  if 
$D$ is defined by a special stable formula (with respect to the division of variables $(x;y)$), equivalently, there exist definable sets
$X\subseteq \monsterset^{x}$ and $Y\subseteq \monsterset^{y}$,
and an order-free definable set $Z\subseteq \monsterset^{(x;y)}$ such that $D=Z\cap(X\times Y)$.
\end{definition}

The following easy fact will be later used in the proof of Theorem \ref{thm:main.RCF}.

\begin{lemma}\label{lem:Boolean.as.union}
Every Boolean combination of special stable sets is a finite union of special stable sets.
\end{lemma}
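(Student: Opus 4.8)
The plan is to reduce to the case of a single intersection/complement operation, and then handle the complement step explicitly. Recall that a special stable set has the form $Z \cap (X \times Y)$ with $Z$ equationally definable, $X \subseteq M^x$ and $Y \subseteq M^y$ definable. The two things to observe are: (i) finite unions of special stable sets are visibly closed under finite unions; (ii) it therefore suffices to show that finite unions of special stable sets are closed under finite intersection and under complement, since every Boolean combination can be rewritten (e.g. via disjunctive normal form, or by a straightforward induction on the construction of the Boolean combination) using only $\cup$, $\cap$, and complement. So the proof is an induction on the structure of the Boolean combination, with the two nontrivial inductive steps being intersection and complement.

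For intersection, it is enough to intersect two special stable sets $Z_1 \cap (X_1 \times Y_1)$ and $Z_2 \cap (X_2 \times Y_2)$: the result is $(Z_1 \cap Z_2) \cap \big((X_1 \cap X_2) \times (Y_1 \cap Y_2)\big)$, which is again special stable because $Z_1 \cap Z_2$ is equationally definable (the equationally definable sets are exactly the quantifier-free definable sets in the order-free reduct, hence closed under $\cap$) and $X_1 \cap X_2$, $Y_1 \cap Y_2$ are definable. Intersecting two finite unions of special stable sets then distributes into a finite union of such pairwise intersections, so finite unions of special stable sets are closed under $\cap$.

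For complement, first I would complement a single special stable set $D = Z \cap (X \times Y)$. Write $D^c = Z^c \cup (X \times Y)^c$. Now $Z^c$ is equationally definable, hence a (degenerate) special stable set: take $Z^c = Z^c \cap (M^x \times M^y)$. And $(X \times Y)^c = (X^c \times M^y) \cup (M^x \times Y^c)$, a union of two rectangular, hence special stable, sets. So the complement of one special stable set is a finite union of special stable sets. For the complement of a finite union $\bigcup_{i=1}^k D_i$ of special stable sets, write it as $\bigcap_{i=1}^k D_i^c$; each $D_i^c$ is a finite union of special stable sets by the previous sentence, and we already showed finite unions of special stable sets are closed under finite intersection, so the whole intersection is again a finite union of special stable sets.

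The argument is entirely routine; the only point requiring a moment's care — and the closest thing to an ``obstacle'' — is making sure the complement of a rectangular set is handled correctly, i.e. remembering that $(X\times Y)^c$ is not rectangular but is a union of two rectangular sets, and that this is what forces the statement to be about \emph{unions} of special stable sets rather than special stable sets alone. Everything else follows by distributivity and the closure of equationally definable sets and of definable sets under the Boolean operations.
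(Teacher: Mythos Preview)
Your proof is correct and follows essentially the same approach as the paper: reduce to closure under intersection and complement, note that the intersection of two special stable sets is special stable, and decompose the complement of $Z\cap(X\times Y)$ using that $Z^c$ is equationally definable and $(X\times Y)^c$ is a finite union of rectangular sets. The only cosmetic differences are that the paper writes $(X\times Y)^c$ as the three-piece union $(X^c\times Y^c)\cup(X^c\times Y)\cup(X\times Y^c)$ rather than your two-piece $(X^c\times M^y)\cup(M^x\times Y^c)$, and it rewrites $D^c$ as $(Z^c\cap X\times Y)\cup(X\times Y)^c$ rather than treating $Z^c$ directly as a special stable set; neither difference is substantive.
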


\begin{proof}
The intersection of special stable sets is again a special stable set, so it suffices to show that the complement of a special stable set is a finite union of special stable sets.
Consider a special set $D=Z\cap(X\times Y)$ where $X\subseteq\monsterset^{x}$ and $Y\subseteq\monsterset^{y}$ are definable, and $Z \subseteq \monsterset^{(x;y)}$ is order-free definable.
Note that 
$$D^{c}=Z^{c}\cup (X\times Y)^c=(Z^c\cap X\times Y)\cup (X\times Y)^c,$$
 $(X\times Y)^c=(X^c\times Y^c)\cup(X^c\times Y)\cup(X\times Y^c)$, and  $Z^c$ is order-free definable. The desired conclusion follows.
\end{proof}

It is well-known that the stability of formulas is preserved under taking Boolean combinations \cite[Lemma 2.1]{anandgeometric}, so Boolean combinations of special stable sets are stable. As it was noted in Example \ref{example:naive.wrong} from the introduction, not all stable sets are Boolean combinations of special stable sets.  We introduce the following notion to remedy this situation.

\begin{definition}\label{def:algebraic_morphism}
Suppose that $D\subseteq \monsterset^{(x;y)}$, $D'\subseteq \monsterset^{(x';y')}$, $\pi_x:\monsterset^{(x,y)}\to \monsterset^x$ and  $\pi_y:\monsterset^{(x,y)}\to \monsterset^y$ are the projection maps, and $\pi_{x'}$ and $\pi_{y'}$ are defined similarly. We say that a map $f:D'\to D$ is an {\bf order-free morphism} 
if there exist 
\begin{enumerate}
    \item a finite covering $(U_i)_{i\in I}$ of $D$ by order-free definable sets,
    \item a finite covering $(U'_i)_{i\in I}$ of $D'$ by order-free definable sets,
    \item a family of order-free definable maps $\big(f^i_x:\pi_{x'}(U'_i)\to\pi_x(U_i)\big)_{i\in I}$,
    \item a family of order-free definable maps $\big(f^i_y:\pi_{y'}(U'_i)\to\pi_y(U_i)\big)_{i\in I}$
\end{enumerate}
such that for each $i\in I$ we have 
that $f^i_x\times f^i_y(U'_i)= U_i$,
$f^i_x\times f^i_y(U'_i\cap D')=U_i\cap D$
and that
$f|_{U'_i\cap D'}=(f^i_x\times f^i_y)|_{U'_i\cap D'}$.
If, moreover,  the map $f^i_x\times f^i_y:U'_i\to U_i$ is a bijection for each $i\in I$, then we call $f$ an {\bf order-free isomorphism}. If there exists an order-free isomorphism between $D$ and $D'$, we say that $D$ is {\bf order-free isomorphic} to $D'$.
\end{definition}

Because of the preservation of stability under Boolean combinations, one can deduce the following easy fact:
if the definable sets $D_1,\ldots, D_n \subseteq \monsterset^{(x;y)}$ are stable and $D\subseteq  D_1\cup\ldots\cup D_n$ is definable, then $D$ is stable if and only if $D\cap D_i$ is stable for each $i\leqslant n$. This easy fact leads to the following, more important, observation:

\begin{lemma}\label{lemma:algebraic_iso_stable}
Suppose the definable sets $D\subseteq \monsterset^{(x;y)}$ and $D'\subseteq \monsterset^{(x';y')}$ are order-free isomorphic. If $D$ is stable, then $D'$ is stable.
\end{lemma}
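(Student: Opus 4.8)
The plan is to reduce the stability of $D'$ to that of $D$ by transporting an unstable witness across the equational isomorphism and using the key combinatorial fact recalled just before the statement: if $D$ is covered by finitely many stable sets, then $D$ is stable iff its intersection with each piece is stable. Recall that $D = \bigcup_{i \in I} U_i$ and $D' = \bigcup_{i \in I} U'_i$ with the $U_i$, $U'_i$ equationally definable, hence stable, and with bijections $f_i := f^i_x \times f^i_y : U'_i \to U_i$ satisfying $f_i(U'_i \cap D') = U_i \cap D$. By the easy fact applied to the covering $(U'_i)_{i\in I}$ of $D'$, it suffices to show each $D' \cap U'_i$ is stable. So the whole problem localizes to a single index $i$, and I may as well drop $i$ from the notation and just show: if $D$ is stable then $f^{-1}(D \cap U) = D' \cap U'$ is stable, where $f = f_x \times f_y : U' \to U$ is a bijection with $f_x$, $f_y$ equationally definable.

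First I would recall why a bijection of this product form pulls stable sets back to stable sets. Suppose for contradiction $D' \cap U'$ is unstable, witnessed by $(a'_k, b'_\ell)_{k,\ell < \omega}$ with $a'_k \in \monsterset^{x'}$, $b'_\ell \in \monsterset^{y'}$, and $(a'_k, b'_\ell) \in D' \cap U'$ iff $k \leqslant \ell$. In particular every pair $(a'_k, b'_\ell)$ lies in $U'$, so $a'_k \in \pi_{x'}(U')$ and $b'_\ell \in \pi_{y'}(U')$ for all $k, \ell$; set $a_k := f_x(a'_k)$ and $b_\ell := f_y(b'_\ell)$. Then $f_x \times f_y (a'_k, b'_\ell) = (a_k, b_\ell)$, and since $f_x \times f_y$ restricts to a bijection $U' \cap D' \to U \cap D$ (and a bijection $U' \to U$), we get $(a_k, b_\ell) \in D \cap U$ iff $(a'_k, b'_\ell) \in D' \cap U'$ iff $k \leqslant \ell$. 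Thus $(a_k, b_\ell)_{k,\ell < \omega}$ is an unstable witness for $D \cap U$, hence for $D$; contradiction. Note the division of variables is exactly what makes this work: because $f$ decomposes as $f_x \times f_y$ with $f_x$ acting only on the $x'$-coordinates and $f_y$ only on the $y'$-coordinates, the image of a ``grid'' $(a'_k, b'_\ell)$ is again a grid $(a_k, b_\ell)$, so the order-pattern defining instability is preserved intact.

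Putting the two steps together: $D'$ is covered by the finitely many stable sets $U'_i$, and we have just shown each $D' \cap U'_i$ is stable; by the easy fact, $D'$ is stable. The only mild subtlety — and the step I would be most careful about — is bookkeeping the domains: one must check that the unstable witness for $D' \cap U'_i$ indeed consists of points of $U'_i$ so that $f_x^i$ and $f_y^i$ can be applied to its coordinates, and that $f^i_x \times f^i_y$ being a \emph{bijection} $U'_i \to U_i$ (not merely $f^i_x \times f^i_y(U'_i) = U_i$) is what lets us conclude the pattern $(a_k,b_\ell)\in D$ iff $k\leqslant\ell$ rather than just one implication. No resolution of singularities or anything deep is needed here; the lemma is purely formal, and indeed it is the reason the notion of equational isomorphism was introduced.
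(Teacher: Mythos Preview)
Your argument follows the same architecture as the paper's proof, but there is a genuine gap at the step you yourself flag as a ``mild subtlety''. You assert that if $(a'_k,b'_\ell)_{k,\ell<\omega}$ witnesses the instability of $D'\cap U'$, then ``in particular every pair $(a'_k,b'_\ell)$ lies in $U'$''. This is not automatic: the definition of an unstable witness only gives $(a'_k,b'_\ell)\in D'\cap U'$ when $k\leqslant\ell$; for $k>\ell$ you know only that $(a'_k,b'_\ell)\notin D'\cap U'$, which is perfectly compatible with $(a'_k,b'_\ell)\notin U'$. And if $(a'_k,b'_\ell)\notin U'$, the bijection $f_x\times f_y\colon U'\to U$ tells you nothing about $(a_k,b_\ell)$, so you cannot conclude $(a_k,b_\ell)\notin D$ and the transported sequence need not witness instability of $D$. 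Knowing $a'_k\in\pi_{x'}(U')$ and $b'_\ell\in\pi_{y'}(U')$ (which does follow from the diagonal entries) suffices to \emph{define} $a_k$ and $b_\ell$, but not to run the biconditional you need.

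The paper repairs exactly this point with Ramsey's theorem. One colours pairs $\{k,\ell\}$ with $k>\ell$ by whether $(a'_k,b'_\ell)\in U'$ and passes to an infinite homogeneous subsequence. The homogeneous colour cannot be ``not in $U'$'': together with the $k\leqslant\ell$ entries (which are in $U'$) that would make the sequence an unstable witness for $U'$ itself, impossible since $U'$ is equationally definable and hence stable. So after thinning one may assume $(a'_k,b'_\ell)\in U'$ for all $k,\ell$, and from that point your transport argument goes through verbatim. The missing ingredient is thus precisely this Ramsey step combined with the stability of $U'$; it is not mere bookkeeping.
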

 
\begin{proof}
Let $U'_i$, $U_i$, $f^i_x$ and $f^i_y$, for $i\in I$, be as in Definition \ref{def:algebraic_morphism}.
Suppose that $D'$ is unstable. From the fact preceding the lemma, we see that $D'\cap U'_k$ is unstable for some $k\in I$.
Let $(a'_i,b'_j)_{i,j<\omega}$ be an unstable witness of $D'\cap U'_k$.
By Ramsey's theorem \cite[Theorem 5.1.5]{tentzieg} there exists an infinite set $N\subseteq\omega$ such that for all $i,j\in N$ we have $(a'_i,b'_j)\in U'_k$ by the fact that $U_k$ is order-free.
Without loss of generality, we assume that $N=\omega$.

Consider $a_i:=f^k_x(a'_i)$, $b_j:=f^k_y(b'_j)$, where $i,j<\omega$. Because $(a'_i,b'_j)\in U'_k$ for all $i,j<\omega$,
we have that $$(a_i,b_j)=f^k_x\times f^k_y(a'_i,b'_j)\in U_k\text{ for all  } i,j<\omega.$$ If $i\leqslant j$ then $(a'_i,b'_j)\in D'\cap U'_k$
and thus $(a_i,b_j)\in D\cap U_k$.
If $i>j$, then $(a'_i,b'_j)\not\in D'\cap U'_k$. As $f^k_x\times f^k_y$ is injective over $U'_k$, we have that $(a_i,b_j) \not\in D\cap U_k$. Therefore, $(a_i,b_j)_{i,j<\omega}$ is an unstable witness of $D$.
\end{proof}

\section{Large stable sets}\label{sec:LSS}
Our strategy of classifying stable sets $D$ relies on an induction on $\dim(D)$. The goal of this section is to obtain a description of stable sets $D\subseteq \monsterset^{(x;y)}$ with $\dim(D)=|x|+|y|$. See Proposition~\ref{prop:symdiff} and \ref{prop:new.2.12} for such a description. Eventually, they will enable us to carry out the induction.

Next, we will obtain a description for large stable sets up to sets of smaller dimension. A definable set  $E$ is {\bf definably connected by codimension $2$} if for all definable $E' \subseteq E $ with $\dim( E \setminus E') \leq \dim E -2$, the set $E'$ is definable connected in the o-minimal structure $\monster$.
Before moving forward, let us recall a few basic facts from o-minimal structures, see \cite{Lou} for details on this subject.

Suppose $X$ is a definable subset of $\monsterset^x$. For $a \in \monsterset^x$, the {\bf local dimension} at $a$ of $X$  is defined as $$ \dim_a(X) = \inf\{ \dim( U \cap X) \mid U \text{ is a definable open subset of }  \monsterset^x\},$$  
where the ``$\dim(U\cap X)$'' refers to the o-minimal dimension and ``open'' refers to being open in the topology generated by open boxes defined by the ordering (here and in the rest of this paper an open box is just the Cartesian product of open intervals - similarly for a closed box). Suppose $E$ is an order-free definable subset of $\monsterset^x$ with nonempty $X \subseteq E$. As usual, the {\it closure} $\mathrm{cl}_E(X)$ of $X$ is the smallest closed subsets in $E$ containing $X$, the {\it interior} $\mathrm{int}_E(X)$ of $X$ in $E$ is the largest open subset of $E$ that $X$ contains, and the {\it boundary} $\mathrm{bd}_E(X)$is defined as $\mathrm{cl}_E(X)\setminus \mathrm{int}_E(X)$.
The {\bf essence} of $X$  in $E$ is the set 
$$\ess_E(X)=\{ a\in E \mid \dim_a(X) = \dim X \}.$$  The {\bf essential interior} of $X$ in $E$ is the set 
$$\esi_E(X)=\{a\in \ess_E(X)\mid \dim_a(E\setminus X)<\dim(X)\}.$$ 
The {\bf essential boundary} of $X$ in $E$ is the set 
$$\esb_E(X)=\{ a\in\
\ess_E(X) \mid \dim_a( E \setminus X) \geq \dim X  \}.$$ 
Note that $\ess(X)=\esi_E(X)\,\cup\,\esb_E(X)$. We will omit ``$E$'' if $E=\monsterset^n$ for some natural number $n$.

The following example illustrates the above notions. This was pointed out to us by the anonymous referee leading to a correction of a mistake in the earlier proof of Lemma~\ref{lem: boundarylemma}, where a wrong definition of essence was used.


\begin{example}
Let $X$ be the open unit ball in $\Rr^3$ with the equator, i.e.,
$$X= \{(x,y,z): x^2+y^2+z^2<1 \} \cup \{(x,y,z): z=0 ~x^2+y^2=1 \}$$ and $E=\Rr^3$. In this case, $\ess_E(X)$ is the unit closed ball. And $\esb_E(X)$ is the unit sphere and $\esi_E(X)$ is the open unit ball. In particular, $\esb_E(X)$ has dimension $2=3-1$, which is in accordance with Lemma~\ref{lem: boundarylemma}.
\end{example}

We collect some well known facts about o-minimal theories.
\begin{fact}\label{fact:omin.facts} 
\begin{enumerate}
    \item Every definable set can be definably triangulated in $T=\rcf$.
    \item Local dimension is definable in $T$.
    \item The closure/interior/boundary/essence/essential boundary/essential interior are definable in $T$.
    \item For a definable subset $X\subseteq \Rr^n$, the o-minimal dimension agrees with the Hausdorff dimension.
\end{enumerate}
\end{fact}
Statement (1) can be found in~\cite[Theorem~2.9, p.130]{Lou}. Statement (2) follows easily from~\cite[Chapter~4]{Lou}, and (3) is a consequence of (2). Item (4) can be proven easily using cell decomposition. In fact, a much more general phenomenon is true; see~\cite[Corollary 1.6]{PhilippMiller}.

The following fact is well-known. A proof can be found, for example, from~\cite{stackexchange}.
 
\begin{fact} \label{fact: stackexchange}
Every connected open subset  of $\Rr^x$ is topologically connected after removing a subset of Hausdorff codimension $2$.
\end{fact}

The next lemma gives us a model-theoretic counterpart of Fact~\ref{fact: stackexchange}.

\begin{lemma}\label{lem:incase}
Every definably connected open subset of $\monsterset^x$ is  definably connected by codimension $2$.
\end{lemma}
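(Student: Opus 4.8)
The plan is to set $n=|x|$, write $U$ for the given definably connected open subset of $M^x=M^n$ (so $\dim U=n$), and prove that for every definable $S\subseteq U$ with $\dim S\le n-2$ the set $U\setminus S$ is definably connected. I will carry this out by first reducing to the case that $S$ is relatively closed, then reducing to a local statement about boxes, and finally proving that local statement by induction on $n$. A key constraint is uniformity over $\mathrm{DLO}$, $\mathrm{DOAG}$ and $\mathrm{RCF}$: I will only ever use paths that are concatenations of \emph{axis-parallel segments} (moving a single coordinate across an interval while freezing the others), since these are definable and continuous in all three theories and require no field or group operations.

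For the reductions, I first note that $\dim\overline{S}=\dim S$ in any o-minimal structure, so $\overline S\cap U$ is relatively closed and nowhere dense; then $U\setminus\overline S$ is dense in $U\setminus S$ and $U\setminus\overline S\subseteq U\setminus S\subseteq\overline{U\setminus\overline S}$, whence definable connectedness transfers from $U\setminus\overline S$ to $U\setminus S$ and I may assume $S$ is closed in $U$. Next, suppose $U\setminus S=A\sqcup B$ with $A,B$ nonempty definable and relatively clopen. Taking closures in $U$, nowhere-density of $S$ gives $\overline A\cup\overline B=U$, so definable connectedness of $U$ forces $\overline A\cap\overline B\neq\emptyset$, while relative closedness of $A$ in $U\setminus S$ gives $\overline A\cap\overline B\subseteq S$. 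Fixing $s$ in this intersection and an open box $W$ with $s\in W\subseteq U$, the point $s\in\overline A\cap\overline B$ forces $W\setminus S$ to meet both $A$ and $B$, so a disconnection of $U\setminus S$ would disconnect $W\setminus S$. Thus it suffices to prove the \textbf{local claim}: if $W\subseteq M^n$ is an open box and $S\subseteq W$ is closed with $\dim S\le n-2$, then $W\setminus S$ is definably connected.

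I prove the local claim by induction on $n$. The cases $n\le 1$ are vacuous, and for $n=2$ the set $S$ is finite, so two points are joined by an axis-parallel staircase through generically chosen turning coordinates. For $n\ge 3$, write $W=W'\times J$ and project to the last coordinate. By the o-minimal fibre-dimension theorem the set $Z=\{t\in J:\dim S_t\ge n-2\}$ of bad levels is finite, and for $t\in J\setminus Z$ the slice $S_t\subseteq M^{n-1}$ has dimension $\le n-3$, so $W'\setminus S_t$ is definably connected by induction. Given $a=(a',a_n)$ and $b=(b',b_n)$ in $W\setminus S$, the vertical fibre of $S$ over $a'$ is closed and misses $a_n$, so I can choose a good level $t_a\notin Z$ in the subinterval of $J$ containing $a_n$ and move $a$ vertically to $(a',t_a)$ inside $W\setminus S$, and similarly produce $(b',t_b)$. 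A dimension count (the base points whose vertical segment over $[t_a,t_b]$ meets $S$ project from a set of dimension $\le n-2$) yields $c\in W'\setminus(S_{t_a}\cup S_{t_b})$ whose vertical segment over $[t_a,t_b]$ avoids $S$. Connecting $(a',t_a)$ to $(c,t_a)$ inside the good slice $t_a$, then vertically to $(c,t_b)$, then to $(b',t_b)$ inside the good slice $t_b$, gives the desired path and closes the induction.

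The main obstacle I anticipate is precisely this uniform, operation-free path construction. In $\mathrm{RCF}$ one could simply join two points by two straight segments through a generic midpoint and bound the bad midpoints by a one-dimension count, but arbitrary segments are not definable in $\mathrm{DOAG}$ or $\mathrm{DLO}$, so that shortcut is unavailable. Committing to axis-parallel staircases restores uniformity at the cost of the more delicate slicing induction, whose real content is to secure \emph{simultaneously} a codimension-$\ge 2$ slice level and a vertical corridor avoiding $S$; this is exactly where the reduction to $S$ closed and the fibre-dimension theorem are doing the work.
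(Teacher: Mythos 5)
Your proof is correct, and it takes a genuinely different route from the paper's. The paper argues by contradiction plus transfer: a definable counterexample $(U,C,W)$ is named by formulas, completeness of $T$ moves it to a counterexample over $\mathbb{R}$, and this contradicts the classical fact that a connected open subset of $\mathbb{R}^n$ remains topologically connected after removal of a set of Hausdorff codimension $2$. That proof is three lines long, but it leans on ingredients external to o-minimality: the coincidence of definable and topological connectedness for definable sets over $\mathbb{R}$, the comparison of o-minimal dimension with Hausdorff dimension, and the existence of an Archimedean model of $T$. Your argument stays inside o-minimality throughout: reduce to $S$ relatively closed via $\dim\overline{S}=\dim S$; localize a putative definable clopen partition $A\sqcup B$ to a box around a point $s\in\overline{A}\cap\overline{B}\subseteq S$; then induct on $n$, using the fibre-dimension theorem to make the set of bad levels finite, and using axis-parallel staircases --- the only paths definable uniformly in $\mathrm{DLO}$, $\mathrm{DOAG}$ and $\mathrm{RCF}$, as you rightly note --- to join any two points through a good slice and a clean vertical corridor. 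What this buys: a self-contained proof needing no transfer and no classical topology, which in fact establishes the lemma for an arbitrary o-minimal theory rather than only those with a standard model over $\mathbb{R}$. What the paper's proof buys is brevity. One presentational point to tighten: in the inductive step you speak of ``connecting inside the good slice,'' but the inductive hypothesis as you stated it yields definable connectedness of $W'\setminus S_{t_a}$, not an actual path; either strengthen the induction statement to staircase-connectedness (which your construction in fact proves), or finish by observing that the three vertical segments and the two slices minus their fibres of $S$ form a chain of definably connected subsets of $W\setminus S$, each meeting the next, so that no definable clopen partition can separate $a$ from $b$. Either repair is immediate.
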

\begin{proof}
Let $U$ be a definably connected open subset of $\monsterset^x$. Suppose to the contrary.  Then we can obtain a definable $C \subseteq U$ with codimension $2$ in $U$ and a clopen subset $W$ of $U\setminus C$. Then we can obtain  $L$-formulas $\varphi(x,y)$, $\psi(x,y)$, and $\theta(x,y)$ such that there is $b \in \monsterset^y$ with $\varphi(x,b)$ defining $U$, 
$\psi(x,b)$ defining $C$,  and $\theta(x,b)$ defining $W$. Using the fact that $T$ is complete and dimensions are definable, we get $U'$, $C'$, and $W'$ in $\Rr$ with similar properties. This is a contradiction to the Fact~\ref{fact: stackexchange} and Fact~\ref{fact:omin.facts}(4). Hence, we have obtained the desired result.
\end{proof}

The next lemma is the key ingredient in proving Proposition~\ref{prop:symdiff}.

\begin{lemma}\label{lem: boundarylemma}
Suppose $X, E$ are subsets of $\monsterset^{x}$ with $X \subseteq E$, $\dim X = \dim E =n$ are definable and $E$ is order-free and definably connected by codimension $2$. 
Then either $$\dim_a(\esb_E(X))=n-1
\text{ for all } a \in \esb_E(X) \quad \text{or} \quad  \esb_E(X)=\emptyset.$$ Moreover, $\esb_E(X)=\emptyset$ if and only if $\dim (E \setminus X)< n$.
\end{lemma}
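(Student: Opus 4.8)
The plan is to first handle the "Moreover" clause, which is essentially a tautology from the definitions, and then attack the dichotomy. For the "Moreover" clause: by definition $a \in \esb_E(X)$ means $a \in \ess(X)$ and $\dim_a(E \setminus X) \geq \dim X = n$. If $\dim(E \setminus X) < n$ then certainly $\dim_a(E \setminus X) < n$ for every $a$, so $\esb_E(X) = \emptyset$. Conversely, if $\dim(E \setminus X) = n$ (it cannot exceed $n$ since $E \setminus X \subseteq E$ and $\dim E = n$), pick $a \in \ess(E \setminus X)$, which is nonempty and in fact $\dim$-large in $E \setminus X$; I would check $a \in \esb_E(X)$ by noting $a \in \overline{X}$ — otherwise a box around $a$ misses $X$, forcing $\dim_a(E) = \dim_a(E \setminus X) = n$ but also this box's intersection with $E$ lies in $E\setminus X$, and one needs $a \in \ess(X)$; here I would use that $E$ is definably connected by codimension $2$ together with $\dim X = \dim E = n$ to argue $\ess(X)$ is dense in $E$ away from a codimension-$1$ set, so I can instead choose $a$ in $\ess(E\setminus X) \cap \ess(X)$ once I know their union misses only a codimension $\geq 1$ piece — more carefully, it suffices to exhibit a \emph{single} point of $\esb_E(X)$, and one gets this from $\dim(\overline{X} \cap \overline{E \setminus X}) = n-1$ when both have dimension $n$, which holds in an o-minimal structure after triangulating (Fact \ref{fact:omin.facts}(1)), since a triangulation refining both $X$ and $E\setminus X$ inside $E$ has a common $(n-1)$-face whose relative interior lies in $\ess(X) \cap \esb_E(X)$.

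For the main dichotomy, suppose $\esb_E(X) \neq \emptyset$ and fix $a \in \esb_E(X)$; I want $\dim_a(\esb_E(X)) = n-1$. First, $\dim_a(\esb_E(X)) \leq n-1$: I claim $\esb_E(X)$ has empty interior in $\monsterset^x$, because on the interior of $X$ (as a subset of $E$, i.e.\ where $\dim_a(E\setminus X) < n$) we have, by definition, points of $\esi$, not $\esb$; and $X$ itself has $\dim_a(X) = n$ everywhere on $\ess(X) \supseteq \esb_E(X)$, so near such a point $X$ is $n$-dimensional and cannot be entirely boundary. Being a definable set with empty interior in $\monsterset^x$, $\esb_E(X)$ has dimension $\leq n-1$. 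For the lower bound $\dim_a(\esb_E(X)) \geq n-1$: this is where I expect the real work. The point $a$ lies in $\ess(X)$ with $\dim_a(E\setminus X) = n$, i.e.\ arbitrarily small boxes $U$ around $a$ have both $U \cap X$ and $U \cap (E\setminus X)$ of dimension $n$. Inside such a $U$, both $X\cap U$ and $(E\setminus X)\cap U$ are $n$-dimensional definable subsets of the $n$-dimensional definably connected set $E \cap U$ — here I would apply Lemma \ref{lem:incase}/the codimension-$2$ connectedness to $E$ (shrinking $U$ so $E\cap U$ is still definably connected, which is possible by o-minimal local conic structure / triangulation) to conclude that $\overline{X \cap U} \cap \overline{(E\setminus X)\cap U}$, the topological frontier separating the two, must have dimension $\geq n-1$: if it had dimension $\leq n-2$, removing it from $E \cap U$ would disconnect $E\cap U$ (the $X$-part and the $(E\setminus X)$-part being a nontrivial clopen partition), contradicting connectedness by codimension $2$. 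Then I would verify that this frontier, near $a$, consists of points in $\esb_E(X)$: a point $b$ in the frontier with small boxes meeting both $X$ and $E\setminus X$ in sets of dimension $n$ — one needs the local dimensions to be exactly $n$, not just positive, which follows by taking $b$ generic of dimension $n-1$ in the frontier (so $b \in \ess$ of it) and using that near such $b$ the sets $X$ and $E\setminus X$ each contain an $n$-dimensional germ (again triangulate: the frontier is a union of faces each of which is a face of both an $X$-simplex and a non-$X$-simplex). This gives $\dim_a(\esb_E(X)) \geq n-1$, completing the proof.

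The main obstacle, as I see it, is the step asserting that the topological frontier between $X\cap U$ and $(E\setminus X)\cap U$ inside $E \cap U$ has dimension exactly $n-1$ and that (a dense part of) it lies in $\esb_E(X)$. The clean way to organize this is to reduce everything to $\Rr$ via completeness and transfer (as in the proof of Lemma \ref{lem:incase}), triangulate $E$ compatibly with $X$ in a small box around $a$ using Fact \ref{fact:omin.facts}(1), and then argue simplicially: faces of the triangulation that are faces of both a simplex inside $X$ and a simplex inside $E\setminus X$ are exactly (the top-dimensional part of) $\esb_E(X)$ locally, and codimension-$2$ connectedness of $E$ forces such faces of dimension $n-1$ to exist whenever $a$ is adherent to both $X$ and $E \setminus X$ with full local dimension. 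Throughout I would invoke Fact \ref{fact:omin.facts}(2),(3) so that all the sets in play ($\ess$, $\esi$, $\esb$, local dimension level sets) are definable, legitimizing the transfer arguments.
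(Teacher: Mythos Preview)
Your proposal is correct and follows essentially the same approach as the paper: reduce to $\rcf$, triangulate $E$ compatibly with $X$, and use definable connectedness by codimension~$2$ to force a shared $(n-1)$-face between an $X$-simplex and an $(E\setminus X)$-simplex (your frontier/disconnection argument is precisely the contrapositive of the paper's path argument through the $(\geq n-1)$-skeleton). The paper organizes matters slightly differently---it first proves the global claim that $\dim\esb_E(X)=n-1$ whenever $\dim X=\dim(E\setminus X)=n$, and then localizes to a small ball around $a$ to obtain the dichotomy and the ``moreover'' clause simultaneously---whereas you localize from the outset; your upper bound is also stated more cleanly in the paper simply as $\esb_E(X)\subseteq\partial X$ in $\monsterset^x$.
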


\begin{proof}
Note that DLO and DOAG are reducts of $\rcf$, and for definable sets in DLO and DOAG, their o-minimal dimension are the same as their o-minimal dimension when viewed as definable sets in $\rcf$. Hence, it suffices to consider the case where $T$ is $\rcf$. We will first show that if $\dim X = \dim  (E \setminus X) =n$, then $\dim(\esb_E(X))=n-1$. Note that $\esb_E(X)$ is a subset of the boundary of $X$ in  the ambient space $\monsterset^x$, so $\dim(\esb_E(X))\leq n-1$. Obtain a triangulation $\Delta$ of $E$ such that $X$ is the union of $\Delta_1 \subseteq \Delta$ and $E \setminus X$ is the union of $\Delta_2\subseteq \Delta$. It suffices to show that there is an open simplex in $\Delta_1$ and an open simplex in $\Delta_2$ sharing a common $(n-1)$-face, as points on this common $(n-1)$-face are elements of $\esb_E(X)$. Obtaining $\Delta'$, $\Delta'_1$, and $\Delta_2'$ from $\Delta$, $\Delta_1$, and $\Delta_2$ by taking only the simplices of dimension $\geq n-1$.
Set $E'$, $E'_1$, and $E'_2$ be the union of $\Delta$, $\Delta'_1$, and $\Delta'_2$. Let $a_1$ be in $E'_1$, and let $a_2$ be in $E'_2$. As $E$ is definably connected by codimension $2$ and semialgebraic, $E'$ is connected. Hence, there is a semialgebraic path $p: [0,1] \to E'$ with $p(0)=a_1$ and $p(1)=a_2$. Let $t_0 =  \sup\{t \mid p [0,t] \subseteq E'_1 \}$. Then $t_0$ lies on a simplex in $\Delta'$ which is a common face of a simplex in $\Delta'_1$ and a simplex in $\Delta'_2$. This common face is an element in $\Delta'$ and must have dimension $n-1$ as $\Delta'$ contains no element of dimension $<n-1$.

Suppose $\esb_E(X) \neq \emptyset$.  Let $a$ be an element of $\esb_E(X)$. Then we have $\dim X \geq \dim_a X \geq n$ and $ \dim (E \setminus X) \geq \dim_a (E \setminus X) \geq n$.  Hence, $\dim X = \dim ( E \setminus X) =n $. The same argument as in the preceding paragraph can be carried out replacing $E$ by $U$ and $X$ by $X \cap U$, where $U \subseteq E$ is any small open ball containing $a$. This gives us the stronger conclusion $\dim_a(\esb_E(X))=n-1$. If $\dim (E \setminus X) < n$, then $\esb_E(X) = \emptyset$.  If $\dim (E \setminus X)\geq  n$, then the argument of the preceding paragraph shows that $\dim(\esb_E(X))=n-1$. So we have obtained all the desired conclusions.
\end{proof}

We now prove the main result of this section.

\begin{prop}\label{prop:symdiff} Suppose $T$ is $\rcf$, $X\subseteq \monsterset^x$ and $Y \subseteq \monsterset^y$ are definable,  $D\subseteq X\times Y$ is stable with $\dim(X)= |x|$, $\dim(Y)= |y|$, and $\dim(D)=|x|+|y|$. Then there exists a finite family $(X_i,Y_i)_{i \in I}$ of semialgebraic sets $X_i\subseteq X$ and $Y_i\subseteq Y$ such that $\dim X_i=|x|$ and $\dim Y_i=|y|$ for each $i \in I$, and
$$\dim\big(D\triangle \bigcup\limits_{i \in I} (X_i\times Y_i)\big)< |x|+|y|.$$
\end{prop}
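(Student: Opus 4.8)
The plan is to reduce to a local/pointwise statement: near a ``generic'' point of $D$ where $D$ has full dimension, stability should force $D$ to look rectangular up to lower-dimensional error. More precisely, first I would pass to $\ess(D)$, the part of $D$ of full local dimension; by Fact~\ref{fact:omin.facts} this is definable and differs from $D$ by a set of dimension $<|x|+|y|$, so it suffices to cover $\ess(D)$ (up to lower dimension) by finitely many rectangles $X_i\times Y_i$ of full dimension contained in $X\times Y$. Since $T$ is $\rcf$, I can definably triangulate and reduce to the case where $\ess(D)$ contains an open box $U\times V$ with $U\subseteq M^x$, $V\subseteq M^y$ open boxes, and where we are asking: on which such boxes can the stable set $D$ fail to be ``almost all'' of $U\times V$? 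The key dichotomy to extract is: for a fixed generic $b\in M^y$, the fibre $D_b=\{a : (a,b)\in D\}\subseteq M^x$ is a definable set of full dimension $|x|$; I want to say that, after shrinking, $D_b$ is essentially independent of $b$.

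The heart of the argument is the following claim, which I would prove by contradiction using the unstable-witness characterization. Suppose that no finite union of full-dimensional rectangles inside $X\times Y$ approximates $D$ up to dimension $<|x|+|y|$. Working inside a small open box $U\times V\subseteq \ess(D)$ (obtained from triangulation), consider the two parametrized families of fibres $\{D_b : b\in V\}$ and $\{D^a := \{b:(a,b)\in D\} : a\in U\}$. If for a generic pair $(a_0,b_0)$ we had that $D_{b_0}$ agrees with $D_{b}$ near $a_0$ for all $b$ near $b_0$ (and symmetrically in the other variable), then locally $D$ would be a product of two full-dimensional definable sets, giving a rectangle; an o-minimal cell-decomposition / compactness argument over the (semialgebraic, hence bounded-piece) parameter space then glues finitely many such local rectangles to cover $U\times V$ up to lower dimension. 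So the failure of the conclusion gives us, after passing to a nonempty open piece, a point $a_0$ and a definable ``slope'': for $a$ near $a_0$ the essential boundary of $D_a$ (inside some fixed equationally definable ambient set, via Lemma~\ref{lem: boundarylemma} and Lemma~\ref{lem:incase}) moves with $a$, i.e. there is a codimension-$1$ ``wall'' in $U\times V$ separating $D$ from its complement and this wall is genuinely tilted with respect to the coordinate division $(x;y)$. This is exactly the configuration of Example~\ref{example:basic}: along such a tilted wall I can, by o-minimality (monotonicity/definable choice, working in $\Rr$ via transfer), pick a strictly monotone definable selection and then inductively extract a sequence $(a_i,b_j)_{i,j<\omega}$ with $(a_i,b_j)\in D\iff i\leqslant j$, contradicting stability of $D$.

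So the skeleton of the proof is: (1) replace $D$ by $\ess(D)$, losing only lower dimension; (2) definably triangulate $X\times Y$ (Fact~\ref{fact:omin.facts}(1)) and reduce to understanding $D$ on each open box $U\times V$ contained in $\ess(D)$, where $\dim(D\cap(U\times V))=|x|+|y|$; (3) on such a box, run the contradiction argument above: if $D\cap(U\times V)$ is not, up to lower dimension, equal to $U\times V$ on an open sub-box, produce a codimension-$1$ tilted separating wall using Lemma~\ref{lem: boundarylemma} (noting $U$ and $V$ are definably connected open, hence definably connected by codimension $2$ by Lemma~\ref{lem:incase}), and convert the wall into an unstable witness as in Example~\ref{example:basic}, contradicting stability; (4) conclude that $\ess(D)$ is covered up to dimension $<|x|+|y|$ by finitely many open sub-boxes $U_i\times V_i$, and take $X_i=U_i$, $Y_i=V_i$, shrinking if necessary so that $X_i\subseteq X$, $Y_i\subseteq Y$, $\dim X_i=|x|$, $\dim Y_i=|y|$. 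The transfer to $\Rr$ in step (3) (as in Lemma~\ref{lem:incase}) is what lets us use ordinary real-analytic/semialgebraic geometry — in particular ordinary connectedness and path-connectedness — to locate the wall.

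The main obstacle, I expect, is step (3): precisely formulating and extracting the ``tilted codimension-$1$ wall'' from the failure of local rectangularity, and then turning that wall into an honest unstable witness. The subtlety is that ``$D$ is not locally a rectangle up to lower dimension'' must be leveraged not just to produce some boundary behavior but specifically a boundary that is \emph{monotone/tilted} with respect to the division $(x;y)$ — a straight wall like the graph $\{x = c\}$ or a horizontal slab does not destabilize. I anticipate needing a careful o-minimal argument (definable choice plus monotonicity, after possibly composing with a definable coordinate rescaling on each triangle) to show that a full-dimensional $D$ whose complement is also full-dimensional on every sub-box, yet which is not approximable by rectangles, must have such a tilted wall; and then the inductive construction of $(a_i,b_j)_{i,j<\omega}$ must be done with enough uniformity (again via saturation/compactness in $\monster$) that it genuinely witnesses the order property. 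The bookkeeping to ensure the finitely many rectangles cover all of $\ess(D)$ except a set of dimension $<|x|+|y|$ — rather than just a dense open piece — is a secondary technical point, handled by Noetherian induction on dimension applied to the (definable) ``bad locus'' where local rectangularity fails.
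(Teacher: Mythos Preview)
Your overall strategy is the paper's: locate the codimension-$1$ ``wall'' via the essential boundary and Lemma~\ref{lem: boundarylemma}, argue that stability forces the wall to be axis-aligned (otherwise Example~\ref{example:basic} produces an unstable witness), and then harvest finitely many rectangles by cell decomposition. The gap you flag in step~(3) --- turning ``not locally a rectangle'' into a precise tilted/axis-aligned dichotomy --- is exactly where the paper supplies the missing device, and it is sharper than fiber analysis.

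Rather than triangulating into boxes and studying how $\partial D_a$ varies with $a$, the paper works globally with $B=\esb_{X\times Y}(D)$ (after shrinking $X,Y$ to open definably connected sets, so Lemma~\ref{lem:incase} applies). Let $B^*$ be the smooth locus of $\ess(B)$; by Lemma~\ref{lem: boundarylemma}, $\dim B^*=|x|+|y|-1$. At each $(a,b)\in B^*$ the tangent space $T_{(a,b)}$ is a hyperplane in $T_a\times T_b$, and the dichotomy becomes pure linear algebra: either $T_{(a,b)}=S_a\times T_b$ for some hyperplane $S_a\subseteq T_a$, or $T_{(a,b)}=T_a\times S_b$, or both projections $T_{(a,b)}\to T_a$ and $T_{(a,b)}\to T_b$ are onto. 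In the third case one picks a line in $T_{(a,b)}$ with nontrivial image in both factors, integrates it to a curve $C\subseteq B^*$ whose projections to $X$ and $Y$ are both one-dimensional, and runs Example~\ref{example:basic} on $\pi_X C\times\pi_Y C$ to contradict stability. This replaces your informal ``the boundary of $D_a$ moves with $a$'' by a pointwise, definable tangent condition.

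This also dissolves your finiteness worry. Setting $D_X=\{a:\pi_X(T_{(a,b)})\neq T_a\text{ for some }(a,b)\in B^*\}$ and $D_Y$ symmetrically, one checks $\dim D_X\leq|x|-1$ and $\dim D_Y\leq|y|-1$. A single cell decomposition of $X$ compatible with $D_X$, and one of $Y$ compatible with $D_Y$, gives finitely many top-dimensional cells $X_i,Y_j$; on each product $X_i\times Y_j$ the essential boundary of $D$ is contained in $B\setminus B^*$, hence has dimension $<|x|+|y|-1$, hence is empty by Lemma~\ref{lem: boundarylemma}, hence $\dim\big((X_i\times Y_j)\setminus D\big)<|x|+|y|$. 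No Noetherian induction on a ``bad locus'' is needed.
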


\begin{proof} 
In this proof, we will use the notion of tangent space and smoothness. This is defined as in~\cite{BCR}.
Removing sets of smaller dimension from $X$ and $Y$, we can arrange that $X$ and $Y$ are open subsets as subsets of $\monsterset^x$ and $\monsterset^y$. By decomposing $X$ and $Y$ into definably connected components, we may further assume that $X$ and $Y$ are definably connected. Hence, $X\times Y$ is a definably connected open subset of $\monsterset^{|x|+|y|}$. By Lemma~\ref{lem:incase}, $X \times Y$ is definably connected by codimension 2. Set $B=\esb_{X\times Y}(D)$. In the special case where $B$ is empty\textemdash by the last statement in  Lemma~\ref{lem: boundarylemma}\textemdash one can simply choose the family $(X_i, Y_i)_{i \in I}$ to consist of the single pair $(X, Y)$ and get $\dim(D\triangle (X\times Y))< |x|+|y|$. We will reduce the general situation to this special case.

Suppose $B=\esb_{X\times Y}(D)$ is non-empty. Lemma~\ref{lem: boundarylemma} then gives us  $\dim (B) = |x|+|y|-1$. Now, decompose $B$ into a finite disjoint union of definable Euclidean open subsets of smooth varieties in the sense of algebraic geometry (i.e. the dimension of the tangent space at every point is the dimension of the variety; see~\cite[Definition~3.3.4]{BCR}). 
Let $B^*$ be the set of points in the components with dimension $|x|+|y|-1$. 
Hence, 
$$\dim B^* = \dim B =|x|+|y|-1.$$ Suppose  $(a,b) \in X \times Y$ is a point in $B^*$. 
Let $T_{(a,b)}$ be the tangent space at $(a,b)$ of $B^*$. Then, $T_{(a,b)}$ also has dimension $|x|+|y|-1$. Let $T_{a}$ be the tangent space of $X$ at $a$, and let $T_b$ be the tangent space of $Y$ at $b$. Since $X,Y$ are open in $\monsterset^x$ and $\monsterset^y$,  $T_a$ is an isomorphic copy of the vector space $\monsterset^x$ over the underlying field of $\monster$, $T_b$ is an isomorphic copy of the vector space $\monsterset^y$ over the underlying field $\monster$, and $T_{(a,b)}$ is a hyperplane in $T_a \times T_b$.

We will show that either $T_{(a,b)} = S_{a} \times T_{b}$ where $S_a$ is a subset of $T_a$ with $\dim S_a = |x|-1$ or $T_{(a,b)} = T_{a}\times S_{b}$ where $S_b$ is a subset of $T_b$ with $\dim S_b =|y|-1$. 
Suppose it is neither of the above, then the projection maps from $T_{(a,b)}$ to $T_a$ and $T_b$ are surjective. 
We can then obtain a line $L_{(a,b)}$ in $T_{(a,b)}$, such that the projection of $L_{(a,b)}$ onto  $T_a$ and $T_b$ has dimension $1$. 
This translates to the existence of a curve $C$ in $B^*$ such that 
with $\pi_X C$ the projection of $C$ on $X$ and $\pi_Y C$ the projections of $C$ on $Y$, 
we have $ (\pi_X C\times \pi_Y C) \cap D$ and $(\pi_X C\times \pi_Y C)\setminus D$ is homeomorphic to the situation in Example \ref{example:basic}. More precisely, we obtain a continuous and increasing function  $f: [0,1]\to [0,1]$   with $f(0)=0$ and $f(1)=1$, together with definable homemorphisms $s: [0,1] \to \pi_X C $ and $t:[0,1] \to \pi_Y C$ such that the image of the graph of $f$ under $s\times t$ is $C$,
\[ (s\times t)\{ (c,d) \in [0,1]^2 :f(c)>d \}=   D\cap (\pi_X C\times \pi_Y C)
\]
and 
\[ (s\times t)\{ (c,d) \in [0,1]^2 :f(c)<d \}=   ((X \times Y)\setminus D)\cap (\pi_X C\times \pi_Y C).
\]
Then $s\times t$ maps the witness of unstability from Example \ref{example:basic} to an unstable witness in $D$, which is a contradiction.


Now, let $(a,b)$ range over $B^*$, and set
$$D_X=\{a \mid  \pi_X(T_{(a,b)})\neq T_a\}\quad \text{and} \quad D_Y=\{b|\, \pi_Y(T_{(a,b)})\neq T_b\}.$$ 
It is easy to check that $\dim D_X = |x|-1$ and $\dim D_Y =|y|-1$. Take a cell decomposition of $X$  such that $D_X$ is contained in the union of the cells of dimension $\leqslant |x|-1$, and a cell decomposition of $Y$ such that $D_Y$ is contained in the union of the cells of dimension $\leqslant |y|-1$.  Let $(X_i, Y_i)_{i \in I}$ be the collection of products of cells of dimension $|x|$ in the cell decomposition of $X$ and cells of dimension $|y|$ in the cell decomposition of $Y$. It is easy to see that $\esb_{X_i\times Y_i}(D\cap X_i\times Y_i) \subseteq B.$ Moreover, as $X_i \cap D_X = \empty$ and $Y_i \cap D_Y =\empty$ for all $i \in I$, so $X_i \times Y_i \cap B^* =\emptyset$. Therefore,
$$  \esb_{X_i\times Y_i}(D\cap X_i\times Y_i) \subseteq B \setminus B^*.$$
By general o-minimality knowledge, $\dim B \setminus B^* < \dim B = |x|+|y|-1.$ Thus, $ \esb_{X_i\times Y_i}(D\cap X_i\times Y_i) $ has dimension $< |x|+|y|-1$ by construction, and is therefore empty by  Lemma~\ref{lem: boundarylemma}. We reduced the situation to the special case at the beginning of this lemma.
\end{proof}  

In the remainder of this section, we will supose that $T$ is either DLO or DOAG and prove an analogue of Proposition~\ref{prop:symdiff}. In a model of RCF, we have two natural topologies, namely, the Zariski topology and Euclidean topology, with the former coarser than the latter. In the same fashion,  we can define on $\monsterset^x$  a topology coarser than the Euclidean topology. 
Let $\sC$ denote the collection of sets defined by lattice combinations (i.e. positive boolean combinations) of sets defined by subsets of $\monsterset^x$ defined by equations. As $T$ is complete, we can arrange that $\monster$ is a reduct of a model of $\RCF$. Recall that the Zariski topology in RCF is noetherian, so for any decreasing sequence ${(C_n)}_{n\in \Nn}$ of elements in $\sC$, there is $N \in \Nn$ such that $C_N=\bigcap_{n} C_{n}$. 
 Hence, $\sC$ is the collection of closed sets of certain noetherian topology which we call the {\bf linear topology} on $\monsterset^x$. As usual, an element $X$ of $\sC$ is {\bf irreducible} if $X$ cannot be written as a nontrivial union of two elements of $\sC$. It is then a standard fact about noetherian topology that every element of $\sC$ can be uniquely (up to permutation) decomposed into a finite union of irreducible elements of $\sC$ such that there is no containment between any two distinct elements.

\begin{remark}\label{remark:order-free.closure.form}
Suppose $x = (x_1, \ldots, x_m)$. If $T$ is DLO,  an irreducible closed subset of $\monsterset^x$ is the solution set of a system where each equation is of either the form $x_i=x_j$  or the form $x_i=c$ with $i$ and $j$ in $\{1, \ldots, m\}$ and $c \in \monsterset$.
If $T$ is $\mathrm{DOAG}$, an irreducible closed subset of $\monsterset^x$ is the solution set of a system consisting of equations of the form $k_1 x_1 + \cdots + k_m x_m =c$ with $k_i$ integers for $i \in \{1, \ldots, m\}$ and $c \in \monsterset$.
\end{remark}

With $T$ still either DLO or DOAG, we will define suitable versions of tangent spaces and smoothness in this setting. Suppose that $X\subseteq \monsterset^x$ is definable. Let $X^\ccl$ be the closure of $X$ with respect to the (coarser) linear topology on $\monsterset^x$, and let $\{ V_i\}_{i\leqslant n}$ lists all irreducible components of $X^\ccl$.
The {\bf tangent space} of $X$ at $a\in X$ is the smallest irreducible
closed set containing $\bigcup\{V_i\;|\;i\leqslant n,\,a\in V_i\}$. Remark~\ref{remark:order-free.closure.form} tells us that this a reasonable definition. 
Moreover, we say that $a\in X$ is {\bf smooth} if there is unique $V_i$ such that $a\in V_i$. The proof of Proposition \ref{prop:symdiff} with this new definition of smoothness and tangent space yields the Proposition~\ref{prop:new.2.12} below. Note that this is in accordance with the notion of smoothness in algebraic geometry; again, see~\cite[Definition~3.3.4]{BCR}. More precisely, in the case of DOAG, the $V_i$'s are all affine subspaces, hence algebraic varieties. All affine subspaces are in particular smooth varieties. Hence, a point $a\in \bigcup_i V_i$ is a smooth point if and only if there is a unique $i$ such that $a\in V_i$; a point lying on the intersection of two distinct irreducible components is not regular, hence not smooth.

\begin{prop}\label{prop:new.2.12}
Suppose $T$ is either $\mathrm{DLO}$ or $\mathrm{DOAG}$, $X\subseteq \monsterset^x$ and $Y \subseteq \monsterset^y$ are definable,  $D\subseteq X\times Y$ is stable with $\dim(X)= |x|$, $\dim(Y)= |y|$, and $\dim(D)=|x|+|y|$. Then there exists a finite family $(X_i,Y_i)_{i \in I}$ of definable sets $X_i\subseteq X$ and $Y_i\subseteq Y$ such that $\dim X_i=|x|$ and $\dim Y_i=|y|$ for each $i \in I$, and
$$\dim\big(D\triangle \bigcup\limits_{i \in I} X_i\times Y_i\big)< |x|+|y|.$$
\end{prop}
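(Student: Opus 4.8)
The plan is to \emph{reduce Proposition~\ref{prop:new.2.12} to Proposition~\ref{prop:symdiff}} by running, essentially verbatim, the same argument, but with the RCF-specific tools (triangulation, smooth points, tangent spaces, cell decomposition, Hausdorff codimension) replaced by their DLO/DOAG analogues already set up in the excerpt. First I would note, as in the proof of Lemma~\ref{lem: boundarylemma}, that o-minimal dimension in DLO and DOAG coincides with the o-minimal dimension in the ambient RCF-reduct; and I would observe that Lemma~\ref{lem:incase} and Lemma~\ref{lem: boundarylemma} are stated for general $T$ (DLO, DOAG or RCF), so they are available here without change. As in Proposition~\ref{prop:symdiff}, I would first discard subsets of smaller dimension so that $X$ and $Y$ are open, then decompose into definably connected components to assume $X$ and $Y$ are definably connected, whence $X\times Y$ is a definably connected open box and, by Lemma~\ref{lem:incase}, definably connected by codimension $2$. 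Set $B=\esb_{X\times Y}(D)$. If $B=\emptyset$, Lemma~\ref{lem: boundarylemma} gives $\dim(D\triangle(X\times Y))<|x|+|y|$ and we are done with the single pair $(X,Y)$.

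When $B\neq\emptyset$, Lemma~\ref{lem: boundarylemma} gives $\dim B=|x|+|y|-1$. The key structural input is the "no-slope dichotomy at a generic smooth point of $B$": at a smooth $(a,b)\in\ess(B)$, writing $V$ for the tangent space of $B$ at $(a,b)$ (in the sense of the equational topology, as defined just before the statement) and $T_a$, $T_b$ for the tangent spaces of $X$ at $a$ and of $Y$ at $b$, one shows that either the projection $\pi_x(V)\subsetneq T_a$ or $\pi_y(V)\subsetneq T_b$. Indeed, if both projections were onto, then using Remark~\ref{remark:equational.closure.form} one can select inside $V$ a one-dimensional "diagonal" equationally-closed subset whose projections to $X$ and to $Y$ are both one-dimensional; this yields a definable curve in $B$ over which $D$ and its complement reproduce, up to homeomorphism, the configuration of Example~\ref{example:basic}, contradicting stability of $D$ exactly as in the proof of Proposition~\ref{prop:symdiff}. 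Here "smooth" means, per the definition before the statement, that $(a,b)$ lies on a unique irreducible component of $B^{\ccl}$, and one uses standard noetherian-topology/dimension facts (the non-smooth locus and $B\setminus\ess(B)$ have dimension $<|x|+|y|-1$) to ensure a dense supply of such points.

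Next, letting $(a,b)$ range over the smooth locus $B^{*}$ of $\ess(B)$, I would set
$$D_X=\{a\mid \pi_x(\text{tangent space of }B\text{ at }(a,b))\neq T_a\},\qquad D_Y=\{b\mid \pi_y(\cdots)\neq T_b\},$$
which are definable by Fact~\ref{fact:omin.facts}-style observations (and the definability of the equational-tangent-space construction, which is clear from Remark~\ref{remark:equational.closure.form}), and check $\dim D_X\leq|x|-1$, $\dim D_Y\leq|y|-1$. Then I would take cell decompositions of $X$ and of $Y$ refining $D_X$ and $D_Y$ respectively, and let $(X_i,Y_i)_{i\in I}$ enumerate products of a top-dimensional cell of $X$ with a top-dimensional cell of $Y$; such $X_i$, $Y_i$ are definable with $\dim X_i=|x|$, $\dim Y_i=|y|$. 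Exactly as in Proposition~\ref{prop:symdiff} one gets $\esb_{X_i\times Y_i}(D\cap(X_i\times Y_i))\subseteq B\setminus B^{*}$, which has dimension $<|x|+|y|-1$, hence is empty by Lemma~\ref{lem: boundarylemma}; so each $D\cap(X_i\times Y_i)$ agrees with $X_i\times Y_i$ up to a set of dimension $<|x|+|y|$, and since $X\times Y$ is covered (up to lower dimension) by the $X_i\times Y_i$, summing up gives $\dim\big(D\triangle\bigcup_i X_i\times Y_i\big)<|x|+|y|$.

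The step I expect to be the main obstacle is verifying that the linear-algebra/tangent-space manipulations of Proposition~\ref{prop:symdiff} go through with the surrogate "tangent space" of the equational topology. In RCF the tangent space at a smooth point is genuinely a linear subspace, the projections are linear maps, and "pick a line with one-dimensional projections" is immediate; in DLO/DOAG the substitute objects are, by Remark~\ref{remark:equational.closure.form}, affine-coordinate subspaces (in DLO, diagonals $x_i=x_j$ and slices $x_i=c$; in DOAG, affine translates of $\Zz$-linear subspaces), so one must check that (i) "the projection is not all of $T_a$" is the right non-degeneracy condition, (ii) when both projections are onto one can still extract a one-dimensional sub-locus whose two projections are one-dimensional and which is parametrized so that Example~\ref{example:basic}'s back-and-forth construction applies, and (iii) the relevant loci $D_X$, $D_Y$, smooth locus, etc., are definable and of the expected dimension in this coarser setting. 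Once these bookkeeping points are settled the proof is a line-by-line transcription of Proposition~\ref{prop:symdiff}.
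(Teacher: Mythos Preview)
Your proposal is correct and matches the paper's approach exactly: the paper's proof of Proposition~\ref{prop:new.2.12} consists of the single sentence ``The proof of Proposition~\ref{prop:symdiff} with this new definition of smoothness and tangent space yields the following proposition,'' and you have simply unpacked that sentence, identifying precisely the places (the surrogate tangent space, the diagonal curve, definability and dimension of $D_X$, $D_Y$, $B^*$) where the transcription needs checking. Your list of potential obstacles is accurate but none of them is an actual obstruction, since by Remark~\ref{remark:equational.closure.form} the irreducible equational closed sets are affine subspaces, so the linear-algebra step and the curve-selection step go through as in RCF.
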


\section{Stable formulas in DLO and DOAG}
In this very short section, we classify the stable formulas in the case where $T$ is either DLO or DOAG.

\begin{theorem}\label{thm:DLO.DOAG.main} Suppose $T$ is either $\mathrm{DLO}$ or $\mathrm{DOAG}$ Then $D\subseteq \monsterset^{(x;y)}$ is stable if and only if $D$ is a finite union of special stable sets.
\end{theorem}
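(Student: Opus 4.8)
The plan is to prove the nontrivial direction (stable $\Rightarrow$ finite union of special stable) by induction on $\dim(D)$, using Proposition~\ref{prop:new.2.12} as the engine of the induction step and the noetherianity of the equational topology (developed in Section~\ref{sec:LSS}) to handle the low-dimensional pieces. The base case $\dim(D) \leq 0$ is immediate: a finite set of points is equationally definable and hence special stable, and a stable set of dimension $-\infty$ is empty. For the inductive step, suppose the result holds for all stable sets of dimension $< d$ and let $D \subseteq \monsterset^{(x;y)}$ be stable with $\dim(D) = d$. Let $E = D^\ccl$ be the equational closure of $D$ in $\monsterset^{(x;y)}$, and decompose $E$ into its finitely many irreducible components $V_1, \ldots, V_k$ in the equational topology (Remark~\ref{remark:equational.closure.form} gives their explicit shape). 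On each $V_j$ of dimension $< d$, the set $D \cap V_j$ is stable of dimension $< d$, hence a finite union of special stable sets by induction. So it suffices to handle $D \cap V_j$ for those $j$ with $\dim V_j = d$; replacing $D$ by $D \cap V_j$, we may assume $E$ is irreducible of dimension $d$.

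Now comes the key reduction to Proposition~\ref{prop:new.2.12}. Write $V = E$, and observe that after an equational change of variables coming from Remark~\ref{remark:equational.closure.form} — in DLO, identifying coordinates forced equal and deleting coordinates forced constant; in DOAG, choosing a basis of the affine subspace $V$ — the set $V$ becomes (equationally isomorphic to, but in these two theories genuinely \emph{equal} to, after the coordinate change) a coordinate subspace $\monsterset^{x'} \times \monsterset^{y'}$ for appropriate subtuples $x', y'$ of the new coordinates, with the division of variables respected because the equations defining $V$ in DLO couple $x$-coordinates only with $x$-coordinates (and constants), and in DOAG a careful choice of basis does the same; this is exactly the place where one checks the division $(x;y)$ survives. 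Under this identification $D$ becomes a stable subset $D' \subseteq \monsterset^{(x';y')}$ with $\dim D' = |x'| + |y'|$, so Proposition~\ref{prop:new.2.12} applies: there is a finite family $(X_i, Y_i)_{i\in I}$ with $\dim X_i = |x'|$, $\dim Y_i = |y'|$ and $\dim\big(D' \triangle \bigcup_i X_i \times Y_i\big) < |x'| + |y'|$. Each $X_i \times Y_i$ is rectangular, hence special stable; pulling back along the (equational, bijective, variable-division-respecting) change of coordinates turns $\bigcup_i X_i \times Y_i$ into a finite union of rectangular — hence special stable — subsets of $V \subseteq \monsterset^{(x;y)}$, call their union $R$. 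By Lemma~\ref{lem:Boolean.as.union} the symmetric difference $D \triangle R$, being a Boolean combination of special stable sets, is a finite union of special stable sets; and it is stable of dimension $< d$. Apply the induction hypothesis to write $D \triangle R$ as a finite union of special stable sets, and then $D = (R \setminus (D\triangle R)) \cup (D \cap (D \triangle R))$ is, by Lemma~\ref{lem:Boolean.as.union} again, a finite union of special stable sets. Assembling the contributions over all components $V_j$ completes the induction.

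For the converse direction, every special stable set is stable by the remarks in Section~\ref{sec:preliminaries} (equationally definable sets and rectangular sets are stable, special stable sets are intersections of such), and stability is preserved under finite unions; so a finite union of special stable sets is stable.

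The main obstacle I expect is the bookkeeping in the coordinate-change step: making rigorous that the equational closure $E = D^\ccl$, once decomposed into irreducible components, can be straightened to a product of coordinate subspaces \emph{compatibly with the partition of variables into $x$ and $y$}. In DOAG this is the assertion that an affine subspace defined by equations each of which separates $x$-variables from $y$-variables — which need \emph{not} hold in general! — and one must instead argue more carefully: the relevant fact is that the projections $\pi_x(E)$ and $\pi_y(E)$ are themselves equationally closed (affine subspaces), that $E \subseteq \pi_x(E) \times \pi_y(E)$, and that one can apply Proposition~\ref{prop:new.2.12} with $X = \pi_x(E)$, $Y = \pi_y(E)$ \emph{directly}, without passing to full ambient space — so the straightening is only needed inside $\pi_x(E)$ and inside $\pi_y(E)$ separately, where it causes no interaction between the two variable blocks. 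Once this is set up correctly the rest is a routine induction, and the same remark shows why Proposition~\ref{prop:new.2.12} was stated with arbitrary definable $X, Y$ of full dimension rather than with $X = \monsterset^x$, $Y = \monsterset^y$.
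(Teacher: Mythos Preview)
Your overall plan --- induction on $\dim(D)$, pass to the equational closure $E$, reduce to $E$ irreducible, project to a full-dimensional situation, apply Proposition~\ref{prop:new.2.12}, and clean up the small-dimensional remainder by induction together with Lemma~\ref{lem:Boolean.as.union} --- is exactly the paper's strategy. The one genuine gap is in the step you yourself flag as the obstacle, and your proposed repair does not close it.

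You correctly observe that the equations cutting out an irreducible $E$ may mix $x$- and $y$-variables (already $x_1=y_1$ in $\mathrm{DLO}$, or $x_1+y_1=0$ in $\mathrm{DOAG}$), so one cannot straighten $E$ to a coordinate product by a block-diagonal change of variables. Your remedy is to take $X=\pi_x(E)$, $Y=\pi_y(E)$, straighten each separately to $\monsterset^{x'}$ and $\monsterset^{y'}$, and apply Proposition~\ref{prop:new.2.12} to $D\subseteq X\times Y$. But this does not deliver the hypothesis $\dim D=|x'|+|y'|$: one has $|x'|+|y'|=\dim\pi_x(E)+\dim\pi_y(E)$, which in general strictly exceeds $\dim E=\dim D$, with equality precisely when $E$ is already a product $\pi_x(E)\times\pi_y(E)$. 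For the diagonal $E=\{x_1=y_1\}$ one gets $|x'|+|y'|=2>1=\dim D$, so the proposition does not apply.

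The paper's fix is simpler and needs no change of basis at all: choose subtuples $x'\subseteq x$ and $y'\subseteq y$ with $|x'|+|y'|=\dim E$ such that the coordinate \emph{projection} $\pi_{x',y'}$ restricts to a bijection from the irreducible $E$ onto all of $\monsterset^{(x';y')}$. Such subtuples always exist: in $\mathrm{DOAG}$, pick $|x'|=\dim\pi_x(E)$ coordinates among $x$ on which $\pi_x(E)$ projects bijectively, then $\dim E-|x'|$ coordinates among $y$ on which $\ker(\pi_x|_E)\subseteq\{0\}\times\monsterset^y$ projects bijectively; in $\mathrm{DLO}$, pick one representative from each free equality-class of coordinates, taking it from $x$ when the class meets $x$. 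Apply Proposition~\ref{prop:new.2.12} to $\pi_{x',y'}(D)\subseteq\monsterset^{(x';y')}$, then pull each $X'_i\times Y'_i$ back to the genuinely rectangular set $\pi_{x'}^{-1}(X'_i)\times\pi_{y'}^{-1}(Y'_i)\subseteq\monsterset^{(x;y)}$ and intersect with $E$ to obtain special stable sets. From here your inductive bookkeeping via Lemma~\ref{lem:Boolean.as.union} goes through; note only the small slip that ``$D\triangle R$, being a Boolean combination of special stable sets'' is not yet known at that point --- what you need, and immediately say, is that it is stable of dimension $<d$, so the induction hypothesis applies.
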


\begin{proof}
Let $Z$ be the closure of $D$ in the linear topology on $\monsterset^{(x;y)}$.  We argue by induction on the dimension of $D$. If $\dim(D)=0$, the conclusion is immediate. Since we are allowed to take finite unions in the theorem, we can arrange that $Z$ is irreducible in the linear topology on $\monsterset^{(x;y)}$. By Remark~\ref{remark:order-free.closure.form},  there are subtuples of variables $x'$ of $x$ and $y'$ of $y$ such that the projection map $\pi_{x',y'}: \monsterset^{(x;y)} \to \monsterset^{(x';y')} $  induces an isomorphism between $Z$ and $\monsterset^{(x';y')}$. From Proposition~\ref{prop:new.2.12}, we get  a finite set $I$, and  $X'_i$ and $Y'_i$ for each $i \in I$ such that $$\dim(\pi_{x',y'}(D)\triangle \bigcup_{i\in I} X'_i\times Y'_i)< \dim(Z) = \dim(D).$$
Set $X_i = \pi^{-1}_{x'}(X'_i)$ and $Y_i = \pi^{-1}_{y'}(Y'_i) $ where $\pi_{x'}: \monsterset^{x} \to \monsterset^{x'} $ and $\pi_{y'}: \monsterset^{y} \to \monsterset^{y'} $ are the projection maps. Then the dimension of the symmetric difference between $D$ and $Z \cap (\bigcup_{i \in I} (X_i \times Y_i))$ is strictly smaller than $\dim (D)$. Set $W_i$ to be the order-free closure of $\big(Z \cap (X_i\times Y_i)\big)\setminus D
$. Then for each $i \in I$, $ (Z \setminus W_i) \cap (X_i \times Y_i) $  is a special stable subset of $D$. Moreover, 
$$D\setminus \left(\bigcup_{i\in I} ((Z\setminus W_i)\cap (X_i\times Y_i)\right)$$ is stable and of smaller dimension. 
Hence, we can apply the inductive hypothesis and obtain the desired conclusion.
\end{proof}

One can observe that the above proof cannot be carried out when $T$ is $\rcf$ because if $Z$ is an irreducible variety, one cannot choose subtuples $x'$ of $x$ and $y'$ of $y$ such that the projection map $\pi_{x', y'}$ as defined in the proof induces an isomorphism.

\section{Stable formulas in RCF}\label{sec:stableRCF}

Finally, we are heading to the classification of stable formulas when $T$ is $\rcf$. With caveats, the strategy is the same as the proof of Theorem~\ref{thm:DLO.DOAG.main}, namely, taking projections to get largeness and then use Proposition \ref{prop:symdiff}.
This is quite similar to the proof of Theorem~\ref{thm:DLO.DOAG.main} except that we need to work harder to arrange for the projection maps to be bijective. To this end, we will need to decompose the original sets into finitely many disjoint pieces.
Hence, the need for topological compactness, which we get by passing to projective space through Lemma \ref{lemma:projectivization} and working with a fixed copy of $\Rr$ in the monster model. Another obstruction to finite decomposition comes from singularities, and this can be avoided by the use of Hironaka's resolution of singularities at the cost of obtaining a classification only up to order-free isomorphisms.

In this section, we will use real algebraic geometry in the classical sense (i.e. zeros of systems of polynomial); see \cite[Section~2, 3]{BCR} for the precise set up. We let
$\mathbb{P}^{x}$ denote of the $|x|$-dimensional projective space over $\monster$ corresponding to the tuple of variables $x$. We will identify $\monsterset^x$ in the usual way with a Zariski open subset of $\mathbb{P}^{x}$.

\begin{lemma}\label{lemma:projectivization}
Let $Z \subseteq \monsterset^{(x;y)}$ be an irreducible algebraic set. Then there are finite tuples $x'$ and $y'$ and an irreducible algebraic set $Z' \subseteq \monsterset^{(x';y')}$ satisfying the following properties:
\begin{enumerate}
    \item there are rational maps $f: \monsterset^{x'} \to \monsterset^x$ and $g: \monsterset^{y'} \to \monsterset^y$ such that $f\times g$ is a birational morphism from $Z'$ to $Z$. 
    \item the Zariski closure of $Z'$ in $ \mathbb{P}^{x'}\times \mathbb{P}^{y'} $ is smooth.
\end{enumerate}
\end{lemma}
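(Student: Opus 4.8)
The plan is to reduce the statement to resolution of singularities of the \emph{abstract} projective variety obtained as the Zariski closure of $Z$, and then to recover the product structure demanded by (1) by choosing two unrelated projective embeddings of the resolution.

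First I would pass to the projective setting. Let $\overline{Z}\subseteq\mathbb{P}^{x}\times\mathbb{P}^{y}$ be the Zariski closure of $Z$; it is trivial if $Z=\emptyset$, so assume $Z\neq\emptyset$. Then $\overline{Z}$ is irreducible, $\dim\overline{Z}=\dim Z$, and $\overline{Z}\cap\monsterset^{(x,y)}=Z$. Since $\monster$ is a real closed field, hence of characteristic zero, Hironaka's theorem applies — either directly over $\monster$, or over a finitely generated field of definition of $Z$ followed by base change — and produces a smooth projective variety $W$ together with a birational morphism $h\colon W\to\overline{Z}$. Composing with the coordinate projections yields morphisms $\alpha=\pi_{x}\circ h\colon W\to\mathbb{P}^{x}$ and $\beta=\pi_{y}\circ h\colon W\to\mathbb{P}^{y}$; these are honest morphisms, so there is no separate indeterminacy to resolve.

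Next I would re-embed $W$ into a product of two projective spaces. Put $D=h^{-1}(\overline{Z}\setminus Z)$, a proper closed subset of $W$. Choosing a sufficiently ample line bundle on $W$, one finds a nonzero global section vanishing on $D$; completing it to a basis of the corresponding (very ample) complete linear system gives a closed embedding $\iota_{1}\colon W\hookrightarrow\mathbb{P}^{x'}$ carrying $D$ into the hyperplane at infinity $H_{x'}$ while $\iota_{1}(W)\not\subseteq H_{x'}$. Let $\iota_{2}\colon W\hookrightarrow\mathbb{P}^{y'}$ be any closed embedding, composed with a generic projective automorphism so that $\iota_{2}(W)\not\subseteq H_{y'}$. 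Then $(\iota_{1},\iota_{2})\colon W\hookrightarrow\mathbb{P}^{x'}\times\mathbb{P}^{y'}$ is a closed immersion; let $\overline{Z'}$ be its image and $Z'=\overline{Z'}\cap\monsterset^{(x';y')}$. Since $\overline{Z'}\cong W$ is smooth and is not contained in the boundary $(H_{x'}\times\mathbb{P}^{y'})\cup(\mathbb{P}^{x'}\times H_{y'})$, it is the Zariski closure of the irreducible affine set $Z'$, which gives (2).

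Finally I would produce the rational maps. Transporting $\alpha$ through the isomorphism $W\to\iota_{1}(W)$ describes it by homogeneous polynomials on $\mathbb{P}^{x'}$, which I dehomogenize to a rational map $f\colon\monsterset^{x'}\to\monsterset^{x}$; define $g$ analogously from $\beta$ and $\iota_{2}$. Because $\iota_{1}(D)\subseteq H_{x'}$, the affine set $Z'$ is contained in $W\setminus D=h^{-1}(Z)$, so $h$ maps $Z'$ into $Z\subseteq\monsterset^{(x,y)}$; unwinding the definitions, $f\times g$ is everywhere defined on $Z'$ and restricts there to $h|_{Z'}\colon Z'\to Z$, which is birational because $h$ is, giving (1). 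The main difficulty is conceptual rather than computational: one cannot resolve $\overline{Z}$ inside $\mathbb{P}^{x}\times\mathbb{P}^{y}$ while keeping the resolution map a product map, so the device is to resolve the abstract variety $\overline{Z}$ and then re-embed it using two independent very ample systems, which automatically restores the product structure; the only genuinely delicate point is the bookkeeping at infinity (the role of $D$) needed to ensure that the resulting birational map is an honest morphism on $Z'$ and not merely a rational map. Characteristic zero is essential, since Hironaka's theorem is used in an unavoidable way.
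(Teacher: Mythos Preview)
Your argument is correct and follows the same overall strategy as the paper—resolve the singularities of the biprojective closure $\overline{Z}$ via Hironaka, then re-embed the smooth resolution $W$ into a product $\mathbb{P}^{x'}\times\mathbb{P}^{y'}$ so that the resolution map factors as $f\times g$—but the re-embedding step is carried out differently. The paper realizes the resolution as an embedded one, $W_{\mathrm{pr}}\subseteq\mathbb{P}^{x}\times\mathbb{P}^{y}\times\mathbb{P}^{z}$, then applies the diagonal trick $(a,b,c)\mapsto(a,c,b,c)$ into $\mathbb{P}^{x}\times\mathbb{P}^{z}\times\mathbb{P}^{y}\times\mathbb{P}^{z'}$ and composes with the two Segre embeddings; this buys explicitness, since the inverses of Segre are visibly polynomial on the standard affine charts and so $f,g$ are automatically everywhere-defined morphisms. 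Your version instead picks two unrelated very ample linear systems to get closed embeddings $\iota_{1},\iota_{2}$ of the abstract resolution $W$, which is cleaner conceptually and sidesteps the auxiliary coordinates $z,z'$.

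One small asymmetry to flag: you arrange $\iota_{1}(D)\subseteq H_{x'}$, and this is exactly what makes $\alpha\circ\iota_{1}^{-1}$ regular on the affine variety $\iota_{1}(W)\cap\monsterset^{x'}$, hence liftable to a polynomial $f$ on $\monsterset^{x'}$. But you let $\iota_{2}$ be ``any closed embedding'' subject only to $\iota_{2}(W)\not\subseteq H_{y'}$; with that choice $\beta\circ\iota_{2}^{-1}$ need not be regular on all of $\iota_{2}(W)\cap\monsterset^{y'}$, so your lift $g$ may acquire poles meeting $\pi_{y'}(Z')$, and then $f\times g$ is only a birational \emph{map} on $Z'$ rather than a morphism. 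The fix is obvious—impose $\iota_{2}(D)\subseteq H_{y'}$ by the same ample-section argument—and the birational conclusion (which is all that is used downstream) is unaffected either way.
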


\begin{proof}
View $Z$ as a subset of the projective space $\mathbb{P}^x\times\mathbb{P}^y$, and let $Z_{\text{pr}}$ be the Zariski closure of $Z$ in $\mathbb{P}^x\times\mathbb{P}^y$. Using Hironaka's theorem on resolution of singularity~\cite{Hironaka64}, we obtain a finite tuple of variables $z$ and an irreducible algebraic set $W_{\text{pr}}\subseteq\mathbb{P}^x\times\mathbb{P}^y\times\mathbb{P}^z$ such that the projection map from $\mathbb{P}^x\times\mathbb{P}^y\times\mathbb{P}^z$ to $\mathbb{P}^x\times\mathbb{P}^y$ induces a birational surjection from $W_{\text{pr}}$ to $Z_{\text{pr}}$. More precisely, the resolution of singularity is done using a sequence of blowups, so we can get the desired $z$; see~\cite[Theorem A]{BierMil} for details.
Choose  a new tuple $z'$ of variables with the same length as $z$. Copy $W_{\text{pr}}$ to $\mathbb{P}^{x}\times \mathbb{P}^{z} \times \mathbb{P}^{y} \times \mathbb{P}^{z'}$ to get $W'_{\text{pr}}$, or more precisely, let $W'_{\text{pr}}$ is the image of $W_{\text{pr}}$ under the map 
$$\mathbb{P}^x\times\mathbb{P}^y\times\mathbb{P}^z \to  \mathbb{P}^x\times\mathbb{P}^z\times\mathbb{P}^y\times \mathbb{P}^{z'}, (a,b, c)\mapsto (a,c, b, c).$$
Choose $x'$ and $y'$ such that $\mathbb{P}^x\times\mathbb{P}^z$ can be identified with a closed subset of $\mathbb{P}^{x'}$, and $\mathbb{P}^x\times\mathbb{P}^{z'}$ can be identified with a closed subset of $\mathbb{P}^{y'}$ via Segre embeddings. Identify $\mathbb{P}^x\times\mathbb{P}^z\times\mathbb{P}^y\times \mathbb{P}^{z'}$ as a closed subset of $\mathbb{P}^{x'}\times \mathbb{P}^{y'}$, and let $Z'_{\text{pr}}$ be the image of $W'_{\text{pr}}$ under this identification. Identify $\monsterset^{x'}$ with an affine piece of $\mathbb{P}^{x'}$ and $\monsterset^{y'}$ with an affine piece of $\mathbb{P}^{y'}$ in such a way that with  $Z' = Z'_{\text{pr}} \cap\monsterset^{x'}$ and $\monsterset^{y'} $, we have $Z'$ is dense in $Z'_{\text{pr}}$. Condition (2) is satisfied as the closure of $Z'$ in $\mathbb{P}^{x'} \times \mathbb{P}^{y'}$ is $Z'_{\text{pr}}$, which is smooth. By choosing suitable affine pieces, we get rational maps $f': \mathbb{P}^{x'} \to\mathbb{P}^{x} \times \mathbb{P}^{z}  $ and $g': \mathbb{P}^{y'} \to\mathbb{P}^{y} \times \mathbb{P}^{z'}$ such that $f' \times g'$ induces a birational morphism from $Z'_{\text{pr}}$ to $W'_{\text{pr}}$. Let $f=\pi_x \circ f'$ and $g= \pi_y \circ g'$ where $\pi_x$ and $\pi_y$ are projection onto $\mathbb{P}^{x}$ and $\mathbb{P}^{y}$. Then, $f$
and $g$ satisfy the condition specified in (1).
\end{proof}

For the next theorem, we are restricting our attention to a copy of the real numbers $\mathbb{R}$ living in the monster model $\monster$. The statement also holds if we replace $\Rr$ by an arbitary Archimedean subfield $K$ of $\monster$ as any such $K$ can be embedded into $\mathbb{R}$ using an automorphism of $\monster$. 

\begin{theorem}\label{thm:main.RCF}
A set $D\subseteq \monsterset^{(x;y)}$ definable over $\mathbb{R}$, is stable if and only if $D$ is a finite union of sets each order-free isomorphic over $\mathbb{R}$ to a special stable set defined over $\mathbb{R}$.
\end{theorem}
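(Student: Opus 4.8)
The plan is to prove the nontrivial direction by induction on $\dim(D)$, imitating the proof of Theorem~\ref{thm:DLO.DOAG.main} but with the modifications forced by the RCF setting that were announced in the introduction to this section. The base case $\dim(D) = 0$ is immediate since $D$ is then finite. For the inductive step, first pass to the Zariski closure $Z$ of $D$ in $\monsterset^{(x;y)}$; by taking the irreducible components of $Z$ and intersecting $D$ with each (using the remark that stability is inherited by intersections with a finite cover), we may assume $Z$ is irreducible. Now apply Lemma~\ref{lemma:projectivization} to $Z$ to obtain $x'$, $y'$, an irreducible $Z' \subseteq \monsterset^{(x';y')}$ whose projective closure is smooth, and rational maps $f,g$ with $f \times g : Z' \to Z$ a birational morphism respecting the division of variables. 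Pulling $D$ back along $f \times g$ gives a set $D' \subseteq Z'$ which, off a lower-dimensional locus, is equationally isomorphic to $D$ — so by Lemma~\ref{lemma:algebraic_iso_stable} it is stable, and it suffices to treat $D'$ on $Z'$, then push the resulting special stable decomposition back down through the equational isomorphism (swallowing the lower-dimensional discrepancy into the induction).

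The heart of the argument is then to get largeness and apply Proposition~\ref{prop:symdiff} on $Z'$. Here is where compactness enters: working inside the fixed copy $\Rr \subseteq \monster$, the projective closure $Z'_{\mathrm{pr}} \subseteq \mathbb{P}^{x'} \times \mathbb{P}^{y'}$ is a smooth projective variety, hence compact in the Euclidean topology. I would use this to cover $Z'_{\mathrm{pr}}$ by finitely many Euclidean-open charts on each of which $Z'_{\mathrm{pr}}$ is, after an $\Rr$-rational change of coordinates, locally the graph of a map; refining, one obtains a finite partition of $Z'$ into definable pieces $Z'_j$, each of which (again up to a set of smaller dimension) projects, via a coordinate projection $\pi_{x'',y''}$ to a suitable subtuple of variables, \emph{bijectively} onto an open subset of $\monsterset^{(x'';y'')}$ of full dimension $|x''| + |y''|$. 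The point of the finite decomposition plus smoothness is precisely to upgrade the ``generically finite-to-one'' projection available in general to a genuinely bijective one on each piece — something that fails for a singular $Z$, as the remark after Theorem~\ref{thm:DLO.DOAG.main} observes. On each piece $\pi_{x'',y''}(D' \cap Z'_j)$ is a stable subset of $\monsterset^{(x'';y'')}$ of full ambient dimension, so Proposition~\ref{prop:symdiff} yields a finite family $(X_i \times Y_i)_i$ of rectangular sets with $\dim\big(\pi_{x'',y''}(D' \cap Z'_j) \triangle \bigcup_i X_i \times Y_i\big) < |x''| + |y''|$.

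From there the bookkeeping follows Theorem~\ref{thm:DLO.DOAG.main} closely. Pull the $X_i, Y_i$ back along the coordinate projections to rectangular sets in the original variables; intersecting with $Z'_j$ gives special stable sets, and, subtracting the equational closure of the ``error'' $\big(Z'_j \cap (X_i \times Y_i)\big) \setminus D'$ as in that proof, one extracts from $D' \cap Z'_j$ a finite union of special stable subsets whose complement in $D' \cap Z'_j$ has strictly smaller dimension. Collecting over the finitely many $j$ and the finitely many irreducible components, what remains of $D'$ has dimension $< \dim D'$, so the inductive hypothesis applies; transporting back through the equational isomorphism $f \times g$ (and noting, via Lemma~\ref{lem:Boolean.as.union}, that the classes involved are closed under the Boolean operations used) completes the induction. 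The converse direction — that finite unions of sets equationally isomorphic over $\Rr$ to special stable sets are stable — is immediate from the fact that special stable sets are stable together with Lemma~\ref{lemma:algebraic_iso_stable} and preservation of stability under finite unions.

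I expect the main obstacle to be Step two: organizing the finite decomposition of $Z'$ so that on each piece a \emph{coordinate} projection (not merely some rational map) is bijective onto a full-dimensional set, uniformly and definably, while controlling the lower-dimensional sets thrown away at each stage and ensuring they can be absorbed into the induction. This is exactly the place where both topological compactness of the real points of the smooth projective model and Hironaka's resolution (hidden inside Lemma~\ref{lemma:projectivization}) are genuinely used, and it is why the argument is currently confined to the Archimedean-parameter case.
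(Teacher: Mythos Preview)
Your proof proposal is correct and follows essentially the same approach as the paper: induction on dimension, reduction to an irreducible Zariski closure, application of Lemma~\ref{lemma:projectivization} to obtain smoothness, use of Euclidean compactness over $\Rr$ to extract a finite cover by pieces on which a coordinate projection $\pi_{x',y'}$ to a subtuple is bijective, application of Proposition~\ref{prop:symdiff} on each piece, and absorption of the lower-dimensional symmetric difference into the induction. The paper organizes the compactness step slightly differently---first treating the case $D \subseteq [-1,1]^{(x;y)}$ via compactness of $Z \cap [-1,1]^{(x;y)}$, then reducing the general case to it by covering $\mathbb{P}^x \times \mathbb{P}^y$ with translated boxes on affine charts---but this is a cosmetic rearrangement of the same idea.
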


\begin{proof}
The backward direction follows from Lemma \ref{lemma:algebraic_iso_stable} and the well-known fact that  Boolean combination preserves stability.

Now, suppose that  $D \subseteq \monsterset^{(x; y)}$ is a  stable set defined over $\mathbb{R}$.  Let $D(\Rr)$ be the set of $\Rr$-points of $D$. By transfer principles, it suffices to show the forward direction of the theorem replacing $\monster$ with the field of real numbers, and $D$ with $D(\Rr)$. Moreover, we are not using the saturation of $\monster$ in the proof of the current theorem. Therefore, without loss of generality, we assume that $\monster=(\Rr; +, \times, <)$ and $D=D(\Rr)$.

Let $Z$ be the Zariski closure of $D$ in $\Rr^{(x; y)}$. 
If $Z$ has dimension $0$, the conclusion is immediate. By the fact that the stability is preserved under taking Boolean combinations, we can arrange that $Z$ is irreducible. Using induction on dimension of $Z$, we can assume that the statement is already proven for all stable sets with dimension smaller than $\dim(Z)$. This induction hypothesis, in particular, allows us to replace $Z$ with $Z'$ as in Lemma \ref{lemma:projectivization}, and so to arrange that the Zariski closure of $Z$ in $\mathbb{P}^{x} \times \mathbb{P}^{y}$ is smooth.

Let $(x'; y')$ range over the pairs where $x'$ and $y'$ are subtuples of $x$ and $y$ such that  $|x'|+|y'|=\dim(D)$, let $\pi_{x',y'}: \Rr^{(x;y)}\to\Rr^{(x';y')}$ be the projection map, and let
$U_{(x',y')}$ denote the maximal Zariski open subset of $Z$ such that the restriction of $\pi_{x',y'}$ to $U_{(x',y')}$ induces an isomorphism on the tangent spaces. Note that this set is indeed Zariski open, since it is defined by some non-vanishing conditions on minors of the Jacobian matrix. 
Since $Z$ is smooth, 
$$Z=\bigcup_{(x';y')} U_{(x',y')}.$$
For a fixed $(a,b) \in U_{(x',y')}$, by the definition of  $U_{(x',y')}$ and the inverse function theorem, the map $\pi_{x',y'}$ induces a homeomorphism from a neighborhood of $(a,b)$ in $Z$ onto its image with respect to the Euclidean topology. Therefore, we can obtain a set $B_{x',y'}^{a,b} \subseteq \Rr^{(x;y)}$ containing $(a,b)$ such that $B_{x',y'}^{a,b}$ is a cartersian product of open intervals in $\Rr$, and the restriction of $\pi_{x',y'}$ to $U_{(x',y')}\cap B_{x',y'}^{a,b}$ is one-to-one.

We consider first the case where $D$ is a subset of $[-1,1]^{(x;y)}$. As $(x'; y')$ ranges over the pairs specified earlier, and $(a,b)$ ranges over $U_{(x',y')}\cap [-1,1]^{(x;y)}$, the sets $U^{a,b}_{(x',y')}:=U_{(x',y')}\cap B^{a,b}_{x',y'}\cap [-1,1]^{(x;y)}$ form an open cover of $Z \cap [-1,1]^{(x;y)}$.
The intersection $Z \cap [-1,1]^{(x;y)}$ is closed, and hence compact. 
Hence, we can obtain a finite cover $(U_i)_{i\in I}$ of $Z \cap[-1,1]^{(x';y')}$ where each $U_i$ is of the form
 $U_{(x',y')}^{a,b}$with $(a,b)\in Z \cap [-1,1]^{(x;y)}$ 
 and $(x'; y')$ as specified earlier.

It is sufficient to show that for each $i\in I$, $D\cap U_i$ is a Boolean combination of sets algebraically isomorphic to special stable sets.
Fix $i \in I$ such that $D\cap U_i\neq\emptyset$. 
Note that $D\cap U_i$ is the intersection of the stable set $D$ with an algebraic set and a rectangular set, so $D\cap U_i$ is a stable subset of the special stable set $U_i$.
On the other hand, the restriction to $U_i$ of the projection map $\pi_{x',y'}$ is an isomorphism on tangent spaces and one-to-one. 
As a consequence, $\pi_{x',y'}(D\cap U_i)$ is stable in $(x';y')$ as any unstable witness in the image can be pulled back. 
Applying Proposition \ref{prop:symdiff}, we get that $\pi_{x',y'}(D\cap U_i)$ is up to a set of dimension smaller than $|x'|+|y'|=\dim(D) $ a finite union $E'$ of sets which are products of definable sets in $\Rr^{x'}$ and definable sets in $\Rr^{y'}$. Taking the preimage under $\pi_{x',y'}$, we learn that
the symmetric difference between $D\cap U_i$ and $E:=U_i\cap \pi_{x',y'}^{-1}(E')$ has dimension smaller than $|x'|+|y'|=\dim(D)$. 
Note that 
$$D\cap U_i= [E\setminus\big((U_i\setminus D)\cap E\big)]\;\cup\;[(U_i\setminus E)\cap D].$$
The two sets $(U_i\setminus D)\cap E$ and $(U_i\setminus E)\cap D$ have their union the symmetric difference between $D\cap U_i$ and $E$, so they are stable sets with dimension smaller than $\dim(D)$. By the inductive hypothesis, $(U_i\setminus D)\cap E$ and $(U_i\setminus E)\cap D$ are Boolean combinations of sets order-free isomorphic to special stable sets. As a consequence, $D\cap U_i$ is also of the desired form.

We reduce the general situation to the special case with $D \subseteq [-1,1]^{(x;y)}$. We construe $\Rr^{(x; y)}$ as an affine open subset of  $\Pp^x \times \Pp^y$. For a transition map $t$ from $\Rr^{(x; y)}$ to another affine open subset $\Rr_t^{(x; y)}$ of  $\Pp^x \times \Pp^y$, set $D_t$ to be $t( \text{Domain}(t) \cap D )$. As $t$ is an order-free isomorphism from $\Rr^{(x; y)}$ to $\Rr_t^{(x; y)}$, 
 $D_t$ as a subset of the $\Rr_t^{(x; y)}$ is again stable. Moreover, if $\dim D_t =\dim D$, then by our arrangement on $Z$, the Zariski closure of $ D_t$ is smooth.
For $(a,b) \in \Rr^{(x;y)}$, the interiors of $(a,b)+[-1,1]^{(x,y)}$ form an open cover of $\Rr^{(x;y)}$. Doing the same for the other affine pieces of $\Pp^x \times \Pp^y$ combine them together, we get a family of closed subset of $\Pp^x \times \Pp^y$ whose interiors form an open cover of $\Pp^x \times \Pp^y$. As $\Pp^x \times \Pp^y$ is compact, we obtain a finite subfamily $(V_j)_{j \in J}$ where $V_j$ is of the form $(a_j,b_j)+[-1,1]^{(x;y)}$ on the affine piece $\Rr_{t_j}^{(x; y)}$with the transition map $t_j$, and the interiors of the members of $(V_j)_{j \in J}$ form an open cover of $\Pp^x \times \Pp^y$. In particular, gives us that
$$  D = \bigcup_{j \in J} t_j^{-1} (D_{t_j} \cap V_j). $$ Hence, it suffices to show that for each $j \in J$, $ t_j^{-1} (D_{t_j} \cap V_j)$ is a finite union of special stable sets. When $\dim(D_t) = \dim(D)$,  $D_{t_j} \cap V_j$ is a finite union of special stable sets in $\Rr_{t_j}^{(x; y)}$ by the aforementioned special case. When $\dim(D_t) < \dim(D)$,  the desired conclusion follows from the induction hypothesis. As the transition map induces order-free isomorphism where it is defined, we get the desired conclusion.
\end{proof}

\subsection*{Acknowledgements} We would like to thank Lou van den Dries, Anand Pillay and Sergei Starchenko for several helpful discussions. We also thank the anonymous referee for many useful comments, and especially for pointing out a mistake in the earlier version.

\bibliographystyle{plain}
\bibliography{1nacfa2}

\end{document}